\documentclass{amsart}
\usepackage{graphicx}
\usepackage{amssymb}
\usepackage{epstopdf}
\usepackage[all,cmtip]{xy}
\usepackage[hidelinks]{hyperref}
\usepackage{xcolor}
\usepackage{mathrsfs}
\usepackage{enumitem}
\usepackage{stmaryrd}

\newcommand{\fm}{\frak{m}}

\newcommand{\sfA}{\mathsf{A}}

\def\del{\partial}

\def\fm{\mathfrak m}

\def\Hom{\operatorname{Hom}}

\def\Ext{\operatorname{Ext}}

\renewcommand{\mod}[1]{\mathsf{mod}(#1)}

\def\depth{\operatorname{depth}}

\def\im{\operatorname{im}}
\def\ev{\operatorname{ev}}
\def\coker{\operatorname{coker}}
\def\ker{\operatorname{ker}}
\def\id{\operatorname{id}}

\def\pd{\operatorname{pd}}

\def\id{\operatorname{id}}

\renewcommand{\geq}{\geqslant}
\renewcommand{\leq}{\leqslant}

\newcommand{\lra}{\longrightarrow}

\newcommand{\Co}[2]{\operatorname{C}_{#1}(#2)}

\theoremstyle{plain}
\newtheorem{thm}[subsection]{Theorem}          \newtheorem*{thm*}{Theorem}
\newtheorem{prp}[subsection]{Proposition}      \newtheorem*{prp*}{Proposition}
\newtheorem{cor}[subsection]{Corollary}        \newtheorem*{cor*}{Corollary}
\newtheorem{lem}[subsection]{Lemma}            \newtheorem*{lem*}{Lemma}
          \newtheorem*{cnj*}{Conjecture}
            \newtheorem*{fct*}{Fact}

\theoremstyle{definition}
\newtheorem{dfn}[subsection]{Definition}       \newtheorem*{dfn*}{Definition}
     \newtheorem*{con*}{Construction}
     \newtheorem*{funcon*}{Functorial Constructions}
      \newtheorem*{obs*}{Observation}
\newtheorem{rmk}[subsection]{Remark}           \newtheorem*{rmk*}{Remark}
\newtheorem{exa}[subsection]{Example}          \newtheorem*{exa*}{Example}
          \newtheorem*{exr*}{Exercise}
          \newtheorem*{sln*}{Solution}
         \newtheorem*{exe*}{Exercise}
         \newtheorem*{qst*}{Question}
         \newtheorem*{prb*}{Problem}
            \newtheorem{stp*}{Setup}
            \newtheorem*{set*}{Setting}
            \newtheorem*{ntn*}{Notation}

\theoremstyle{plain}

\newenvironment{parts}{\begin{enumerate}[label=\upshape (\alph*)]}{\end{enumerate}}
\newenvironment{eqc}{\begin{enumerate}[label=\upshape (\roman*)]}{\end{enumerate}}

\newcommand{\F}{F}

\newcommand{\Tr}{\mathsf{Tr}}

\newcommand{\Cosyz}[2]{\operatorname{Cosyz}_{#1}(#2)}
\newcommand{\StCosyz}[2]{\operatorname{StCosyz}_{#1}(#2)}

\title{Cosyzygy modules}
\author{Petter Andreas Bergh, David A. Jorgensen, Peder Thompson}
\date{\today}                                           

\address[P. A. Bergh]{Institutt for matematiske fag, NTNU, N-7491 Trondheim, Norway}
\email{petter.bergh@ntnu.no}
\urladdr{https://www.ntnu.edu/employees/petter.bergh}
\address[D. A. Jorgensen]{Department of Mathematics, University of Texas at Arlington, 411 S. Nedderman Drive, Pickard Hall 429, Arlington, TX 76019, U.S.A.}
\email{djorgens@uta.edu}
\urladdr{http://www.uta.edu/faculty/djorgens/}
\address[P. Thompson]{Department of Business and Mathematics, M{\"a}lardalen University, V{\"a}ster{\aa}s, Sweden}
\email{peder.thompson@mdu.se}
\urladdr{https://www.mdu.se/en/malardalen-university/staff?id=ptn04}

\subjclass[2020]{13D02, 13C05, 13H10}
\keywords{cosyzygy module, Gorenstein ring, $n$-torsionfree, syzygy module, torsionless module}
\thanks{Part of this work was completed at Centro Internazionale per la Ricerca Matematica in Trento, Italy, and part during a visit of the first two authors to Mälardalen University in Västerås, Sweden.}

\begin{document}
\maketitle

\begin{abstract}
We characterize all first cosyzygy modules of a given finitely generated module over a local ring. We do this by introducing the notion of a canonical cosyzygy module, and showing how other cosyzygy modules can be obtained from this module. As an application, we characterize Gorenstein rings via properties of canonical first cosyzygy modules.
\end{abstract}

\section{Introduction}
Let $R$ be a commutative noetherian ring.  Syzygy modules in a projective resolution of a finitely generated $R$-module are well-behaved in two fundamental ways: they exist and are unique up to stable isomorphism. Cosyzygy modules, which arise by taking cokernels of embeddings into projective modules, are more problematic: such an embedding need not exist, and when it does, different embeddings can produce cokernels that are not stably isomorphic. 
Despite this, many classes of modules do have cosyzygy modules, including torsionless modules. Consequently, modules with partial co-resolutions by finitely generated projective modules have been studied in many contexts \cite{AB69,AR96,DT23,EG81,Hos90,MTT17,MRS18}. 

A principal goal of this paper is to describe all first cosyzygy modules of a given finitely generated module $M$, that is, all finitely generated modules $N$ fitting into an exact sequence $0\to M\to P\to N\to 0$ with $P$ projective. To do this, we introduce a distinguished first cosyzygy module $X$---the \emph{canonical} first cosyzygy module---which is characterized by vanishing of $\Ext_R^1(X,R)$. Every torsionless $R$-module has such a cosyzygy module (Proposition \ref{1st_cosyz_exists}) and it is unique up to stable isomorphism (Corollary \ref{stablyiso}). More importantly, we show in Theorem \ref{allcosyz} that if $R$ is a local ring and $M$ is a module with no free summand, then the canonical first cosyzygy module $X$ (with no free summand) controls all first cosyzygy modules of $M$: Taking the start of a minimal free resolution of $X$,
\[\xymatrix{
F_2\ar[r]^{\del_2} & F_1 \ar[r]^{\del_1} & F_0 \ar[r]^{} & X \ar[r] & 0\;,
}\]
with $M=\coker \del_2$, the first cosyzygy modules of $M$ are precisely the modules $\coker\sigma\del_1$ for some map $\sigma:F_0\to F$ to a free module $F$ satisfying $\im\del_1\cap \ker \sigma=0$. In Remark \ref{Mfreesmd} we also explain the case where $M$ has a nonzero free summand.

We then extend the construction to higher cosyzygy modules. The resulting notion of a canonical $n$\textsuperscript{th} cosyzygy module (Definition \ref{canonical_nth}) is closely tied to the notion of $n$-torsionfreeness studied by Auslander and Bridger \cite{AB69}. Indeed, $M$ has a canonical $n$\textsuperscript{th} cosyzygy module if and only if $M$ is $n$-torsionfree (Proposition \ref{nco_ntf}). Moreover, all $n$\textsuperscript{th} cosyzygy modules fit into an exact sequence with the canonical one (Proposition \ref{canonicalnthbuilds}). An application of this theory is a characterization of Gorenstein rings: In Theorem \ref{Gor_char} we show that $R$ is Gorenstein if and only if the canonical first cosyzygy module $X$ of any torsionless $R$-module $M$ with $\depth M\geq \depth R$ has the property that $X$ is also torsionless with $\depth X\geq \depth R$.

Here is a brief outline of the paper: In Section \ref{sec_abelian}, we formalize the notion of cosyzygy objects in any abelian category with enough projectives, and consider some first results in this setting. In particular, we show how $n$\textsuperscript{th} cosyzygy modules behave with respect to short exact sequences (Proposition \ref{main}).  We then define the notion of a canonical first cosyzygy module over a commutative noetherian ring in Section \ref{sec_canonical1st}, and show how a canonical first cosyzygy can be constructed from any other cosyzygy module (Proposition \ref{stack}). Section \ref{sec_classify} contains our main result characterizing all first cosyzygy modules over local rings (Theorem \ref{allcosyz}). In Section \ref{sec_cosyztorsionless}, we give examples that exhibit the range of possible behaviours of first cosyzygy modules. Finally, in Section \ref{sec_Gor} we extend our definition to consider canonical higher cosyzygy modules and use these to give a characterization of local Gorenstein rings in terms of canonical first cosyzygy modules (Theorem \ref{Gor_char}).

As a brief note on terminology, our usage of ``cosyzygy" follows that of, for example, \cite{Jen90} or \cite{MRS18}, and is not to be confused with usage elsewhere in the literature, where ``cosyzygy" may refer to cokernel modules in an injective resolution.

\section{Cosyzygy objects in abelian categories}\label{sec_abelian}
Let $\sfA$ be an abelian category with enough projectives. (For example, the module category of any ring.) For an $\sfA$-complex $P$ and an integer $n$, denote the degree $n$ cokernel by $\Co{n}{P}=\coker(P_{n+1}\to P_n)$.  Every object $M\in \sfA$ has a projective resolution $P\xrightarrow{\sim} M$, and as is standard, for each $n\geq 1$, the object $\Co{n}{P}$ is called an \emph{$n$\textsuperscript{th} syzygy} of $M$.  In particular, for each $n\geq 1$ there is an exact sequence
\[\xymatrix{
0 \ar[r] & \Co{n}{P} \ar[r] & P_{n-1}\ar[r] & \cdots \ar[r] & P_{0}\ar[r] & M \ar[r] & 0
}\]
with each $P_i$ projective. Moreover, $n$\textsuperscript{th} syzygies of $M$ are unique up to stable isomorphism. That is, given any two $n$\textsuperscript{th} syzygies $K$ and $K'$ of $M$, there exist projectives $Q$ and $Q'$ such that $K\oplus Q\cong K'\oplus Q'$. This is the content of Schanuel's Lemma. The following formulation is really a corollary of the classic statement, the module version of which can be found for example in \cite[Corollary 5.5]{Lam99}. The version for abelian categories can be proven in exactly the same way as the module theory version, since pullbacks exist and preserve kernels in an abelian category.

\begin{lem}[Schanuel's Lemma]\label{schanuel}
Let $M$ be an object in $\sfA$. Suppose that
\[\xymatrix{
0\ar[r] & K\ar[r] & P_{n-1} \ar[r] & \cdots \ar[r] & P_0\ar[r] & M\ar[r]  &0
}\]
and
\[\xymatrix{
0\ar[r] & K'\ar[r] & P'_{n-1} \ar[r] & \cdots \ar[r] & P'_0\ar[r] & M\ar[r] & 0
}\]
are exact sequences where all $P_i$ and $P'_i$ are projective. Then 
$$K\oplus P'_{n-1}\oplus P_{n-2}\oplus P'_{n-3}\oplus \cdots \cong K'\oplus P_{n-1}\oplus P'_{n-2}\oplus P_{n-3}\oplus \cdots.$$
In particular, $K$ and $K'$ are stably isomorphic.\hfill$\square$
\end{lem}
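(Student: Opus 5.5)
I would prove this by induction on $n$, with the base case $n=1$ being the classical Schanuel Lemma, proved via a pullback. For $n=1$ we have $0\to K\to P_0\xrightarrow{\pi} M\to 0$ and $0\to K'\to P'_0\xrightarrow{\pi'} M\to 0$. Form the pullback $E=P_0\times_M P'_0$ of $\pi$ and $\pi'$. In an abelian category, pulling back an epimorphism gives an epimorphism, and the kernel is preserved. So the projection $E\to P'_0$ is an epimorphism with kernel isomorphic to $\ker\pi=K$, and the projection $E\to P_0$ is an epimorphism with kernel $K'$. This gives short exact sequences
\[0\to K\to E\to P'_0\to 0,\qquad 0\to K'\to E\to P_0\to 0.\]
Both split because $P'_0$ and $P_0$ are projective. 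Hence $K\oplus P'_0\cong E\cong K'\oplus P_0$, which is the claimed isomorphism for $n=1$.

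For the inductive step, suppose $n\geq 2$. Let $L=\im(P_{n-1}\to P_{n-2})=\ker(P_{n-2}\to P_{n-3})$, and define $L'$ in the same way. Truncating the sequences gives exact sequences of length $n-1$,
\[0\to L\to P_{n-2}\to\cdots\to P_0\to M\to 0,\]
and similarly for $L'$. By the induction hypothesis,
\[L\oplus P'_{n-2}\oplus P_{n-3}\oplus\cdots\cong L'\oplus P_{n-2}\oplus P'_{n-3}\oplus\cdots.\]
Call this common object $N$, and write the two alternating sums of projectives as $Q=P'_{n-2}\oplus P_{n-3}\oplus\cdots$ and $Q'=P_{n-2}\oplus P'_{n-3}\oplus\cdots$.

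Next, take the direct sum of $0\to K\to P_{n-1}\to L\to 0$ with the identity sequence $0\to 0\to Q\to Q\to 0$. This gives $0\to K\to P_{n-1}\oplus Q\to L\oplus Q\to 0$. Doing the same on the primed side gives $0\to K'\to P'_{n-1}\oplus Q'\to L'\oplus Q'\to 0$. Both sequences now end in $N$ after composing with the isomorphism, and their middle terms are projective. Applying the case $n=1$ yields
\[K\oplus P'_{n-1}\oplus Q'\cong K'\oplus P_{n-1}\oplus Q.\]
Substituting the definitions of $Q$ and $Q'$, this is exactly the alternating sum in the statement. The indices come out right because $Q'$ begins with $P_{n-2}$ and $Q$ begins with $P'_{n-2}$. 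Stable isomorphism of $K$ and $K'$ follows immediately, since the remaining summands are finite direct sums of projectives and hence projective.

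The main obstacle is the categorical base case. There we must justify, without elements, two facts: the pullback of an epimorphism is an epimorphism, and the kernel of the pulled-back map is identified with the original kernel. Both are standard properties of abelian categories (see e.g.\ Freyd/Mitchell), and I would cite them rather than reprove them. The rest is bookkeeping of the alternating indices.
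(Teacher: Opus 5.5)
Your proof is correct and is essentially the argument the paper points to. The paper proves the case $n=1$ by a pullback, which works in any abelian category because pullbacks exist there and preserve epimorphisms and kernels, and then gets the general case by the standard induction padding with projective summands, as in the module-theoretic Corollary 5.5 of Lam that it cites. Whether you invoke the $n=1$ case at the top after the induction hypothesis, as you do, or at the bottom first makes no difference, and your bookkeeping produces exactly the alternating sums in the statement.
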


One of the main aims of this paper is to develop a notion related to $n$\textsuperscript{th} syzygies:

\begin{dfn}
Let $M$ and $N$ be objects in $\sfA$ and $n\geq 1$ be an integer. 
\begin{parts}
\item $N$ is an \emph{$n$\textsuperscript{th} cosyzygy of $M$} if there exists an exact sequence
\[\xymatrix{
0 \ar[r] & M \ar[r] & P_{-1}\ar[r] & \cdots \ar[r] & P_{-n}\ar[r] & N \ar[r] & 0
}\]
where each $P_{-i}$ is a projective object in $\sfA$. 
\item $N$ is a \emph{stable $n$\textsuperscript{th} cosyzygy of $M$} if $N$ is an $n$\textsuperscript{th} cosyzygy of $M\oplus Q$ for some projective object $Q$ in $\sfA$. 
\end{parts}
Denote the class of $n$\textsuperscript{th} cosyzygies of $M$ by $\Cosyz{n}{M}$ and the class of stable $n$\textsuperscript{th} cosyzygies of $M$ by $\StCosyz{n}{M}$.
\end{dfn}
For any object $M$ and $n\geq 1$, one has $\Cosyz{n}{M}\subseteq \StCosyz{n}{M}$. However, not every object embeds into a projective one, so cosyzygies need not exist and the classes $\Cosyz{n}{M}$ and $\StCosyz{n}{M}$ may be empty. Cosyzygies do exist in many situations, though: For example, if $M$ is an $n$\textsuperscript{th} syzygy, then both classes are nonempty. 
We first consider an elementary description of stable cosyzygy objects:

\begin{lem}\label{lem_cos_syz}
Let $M$ and $N$ be objects in $\sfA$ and $n\geq 1$ be an integer. The following are equivalent:
\begin{eqc}
\item $N$ is a stable $n$\textsuperscript{th} cosyzygy of $M$.
\item There exists a projective resolution $P\xrightarrow{\sim} N$ for which the $n$\textsuperscript{th} syzygy $\Co{n}{P}$ is stably isomorphic to $M$.
\item For every projective resolution $P\xrightarrow{\sim} N$, one has that the $n$\textsuperscript{th} syzygy $\Co{n}{P}$ is stably isomorphic to $M$.
\end{eqc}
\end{lem}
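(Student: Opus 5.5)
I will prove the cycle (i)$\Rightarrow$(ii)$\Rightarrow$(iii)$\Rightarrow$(i), with Schanuel's Lemma (Lemma \ref{schanuel}) doing most of the work.

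\emph{(i)$\Rightarrow$(ii).} Suppose $N$ is an $n$\textsuperscript{th} cosyzygy of $M\oplus Q$ with $Q$ projective. Then there is an exact sequence
\[0\to M\oplus Q\to P_{-1}\to\cdots\to P_{-n}\to N\to 0 .\]
Reindex $P_{-i}$ as $P_{n-i}$ for $1\le i\le n$. Splice a projective resolution of $M\oplus Q$ onto the left end, which exists since $\sfA$ has enough projectives. The result is a projective resolution $P\xrightarrow{\sim}N$. In this resolution the $n$\textsuperscript{th} syzygy $\Co{n}{P}$ is the image of $P_n\to P_{n-1}$, which is isomorphic to $M\oplus Q$. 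Hence $\Co{n}{P}$ is stably isomorphic to $M$.

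\emph{(ii)$\Rightarrow$(iii).} Let $P\xrightarrow{\sim} N$ be a resolution with $\Co{n}{P}$ stably isomorphic to $M$, and let $P'$ be any other projective resolution of $N$. Lemma \ref{schanuel} shows that $\Co{n}{P}$ and $\Co{n}{P'}$ are stably isomorphic. Stable isomorphism is transitive, since if $A\oplus Q_1\cong B\oplus Q_2$ and $B\oplus Q_3\cong C\oplus Q_4$, then $A\oplus Q_1\oplus Q_3\cong C\oplus Q_4\oplus Q_2$. So $\Co{n}{P'}$ is stably isomorphic to $M$.

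\emph{(iii)$\Rightarrow$(i).} Choose any projective resolution $P\xrightarrow{\sim}N$, and write $K=\Co{n}{P}$. This gives an exact sequence $0\to K\to P_{n-1}\to\cdots\to P_0\to N\to 0$. By (iii) there are projectives $Q,Q'$ with $M\oplus Q\cong K\oplus Q'$. Add $Q'$ to the leftmost term: replace $P_{n-1}$ by $P_{n-1}\oplus Q'$ and $K\hookrightarrow P_{n-1}$ by the direct sum of this map with the identity of $Q'$. The sequence stays exact, because direct sums of exact sequences are exact and $0\to Q'\xrightarrow{=}Q'\to 0$ contributes nothing to the cokernel. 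This yields an exact sequence
\[0\to M\oplus Q\to P_{n-1}\oplus Q'\to P_{n-2}\to\cdots\to P_0\to N\to 0 .\]
It exhibits $N$ as an $n$\textsuperscript{th} cosyzygy of $M\oplus Q$, i.e. a stable $n$\textsuperscript{th} cosyzygy of $M$.

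No step is a real obstacle. The only point needing care is the padding in (iii)$\Rightarrow$(i). When $n=1$ the modified term $P_0\oplus Q'$ maps onto $N$ via the old map on $P_0$ and zero on $Q'$, and the same direct-sum argument shows the sequence is still exact.
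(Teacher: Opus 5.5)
Your proposal is correct and follows essentially the same route as the paper: (i)$\Rightarrow$(ii) by splicing a resolution of $M\oplus Q$ (which the paper calls immediate from the definition), (ii)$\Rightarrow$(iii) by Schanuel's Lemma plus transitivity of stable isomorphism, and (iii)$\Rightarrow$(i) by adding the contractible complex $Q'\xrightarrow{=}Q'$ in degrees $n$ and $n-1$. You have simply made explicit a few details that the paper leaves implicit: the splicing, the transitivity check, and the $n=1$ padding.
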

\begin{proof}
The implication (i)$\Rightarrow$(ii) follows from the definition.

(ii)$\Rightarrow$(iii): Suppose there exists a projective resolution $P\xrightarrow{\sim} N$ for which $\Co{n}{P}$ is stably isomorphic to $M$. 
If $P'\xrightarrow{\sim} N$ is any other projective resolution, Schanuel's Lemma \ref{schanuel} implies that $\Co{n}{P}$ and $\Co{n}{P'}$ are stably isomorphic. Stable isomorphism is an equivalence relation so $M$ and $\Co{n}{P'}$ are stably isomorphic as well.

(iii)$\Rightarrow$(i): Let $P\xrightarrow{\sim} N$ be a projective resolution. By assumption, there are projectives $Q$ and $Q'$ such that $M\oplus Q\cong \Co{n}{P}\oplus Q'$. Now, adding the contractible complex $0\xrightarrow{}Q'\xrightarrow{=}Q'\xrightarrow{}0$ to $P$ in degrees $n$ and $n-1$ produces an exact complex
\[\xymatrix@C=2em{
0 \ar[r] & \Co{n}{P}\oplus Q' \ar[r] & P_{n-1}\oplus Q'\ar[r] & P_{n-2} \ar[r] & \cdots \ar[r] & P_0\ar[r] & N\ar[r] & 0\;.
}\]
It follows from this exact complex, and the fact that $M\oplus Q\cong \Co{n}{P}\oplus Q'$, that $N$ is a stable $n$\textsuperscript{th} cosyzygy of $M$. 
\end{proof}

We also need the following observation about first cosyzygies:
\begin{lem}\label{basics2}
Let $M$ and $N$ be objects in $\sfA$, and $Q$ be a projective object in $\sfA$. Then $N \in \Cosyz{1}{M}$ if and only if $N\oplus Q\in \Cosyz{1}{M}$.
\end{lem}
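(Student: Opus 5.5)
The plan is to argue both directions directly from the definition of a first cosyzygy, by modifying the projective object in the middle of a short exact sequence.

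For the forward direction, suppose $N\in\Cosyz{1}{M}$, witnessed by an exact sequence $0\to M\xrightarrow{f} P\xrightarrow{g} N\to 0$ with $P$ projective. I will take the direct sum of this sequence with the split exact sequence $0\to 0\to Q\xrightarrow{=} Q\to 0$. The result is an exact sequence
\[
0\to M\xrightarrow{\left(\begin{smallmatrix} f\\ 0\end{smallmatrix}\right)} P\oplus Q\xrightarrow{g\oplus \id} N\oplus Q\to 0 .
\]
Since $P\oplus Q$ is projective, this shows $N\oplus Q\in\Cosyz{1}{M}$.

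For the converse, suppose there is an exact sequence $0\to M\xrightarrow{f} P\xrightarrow{g} N\oplus Q\to 0$ with $P$ projective. Let $\pi\colon N\oplus Q\to Q$ be the projection. The composite $\pi g\colon P\to Q$ is an epimorphism onto a projective object, so it splits, and its kernel $P'$ is a direct summand of $P$; hence $P'$ is projective.

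Next I will compute the kernel of the restriction $g|_{P'}\colon P'\to N\oplus Q$. This kernel is $\ker g\cap P'$. Since $\ker g\subseteq \ker \pi g=P'$, it equals $\ker g\cong M$.

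It remains to identify the image of $g|_{P'}$ with $N$. Because $g$ is surjective, $g$ maps $P'=g^{-1}(\ker\pi)$ onto $\ker \pi = N$. Thus there is an exact sequence $0\to M\to P'\to N\to 0$ with $P'$ projective, so $N\in\Cosyz{1}{M}$.

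In a general abelian category, elements are not available. The image claim can instead be phrased as follows: the pullback of the epimorphism $g$ along the monomorphism $N\hookrightarrow N\oplus Q$ is $P'\to N$, and pullbacks of epimorphisms are epimorphisms in an abelian category. The kernel statement is the standard fact that pullbacks preserve kernels, which was already noted before Lemma \ref{schanuel}.

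The only step needing care is this converse, and specifically checking that $P'$ is projective and that $M$ still embeds in $P'$. Both follow from the splitting of $\pi g$ and the containment $\ker g\subseteq\ker\pi g$.
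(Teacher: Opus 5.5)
Your proof is correct and takes essentially the paper's approach: you set $P'=\ker(\pi g)$, note it is projective because $\pi g$ splits, and show that the induced map $P'\to N$ is an epimorphism with kernel $M$. The only difference is in the verification: the paper obtains surjectivity and the kernel from the Snake Lemma applied to the diagram, whereas you identify $P'\to N$ as the pullback of $g$ along $N\hookrightarrow N\oplus Q$.
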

\begin{proof}
If $N\in \Cosyz{1}{M}$, then there is an exact sequence $0\to M\to P\to N\to 0$ with $P$ projective. Adding the complex $Q\xrightarrow{=}Q$ to this yields an exact sequence $0\to M \to P\oplus Q\to N\oplus Q\to 0$, so $N\oplus Q\in \Cosyz{1}{M}$.

Conversely, suppose that $N\oplus Q\in \Cosyz{1}{M}$. There is a projective object $P$ and a commutative diagram with exact rows and columns:
\[\xymatrix{
&&0\ar[d]&0\ar[d]&\\
&&P'\ar[d]&N\ar[d]&\\
0 \ar[r] & M \ar[r] & P \ar[r]\ar[d] & N\oplus Q \ar[r] \ar[d]& 0\\
&&Q\ar[r]^{=}\ar[d]&Q\ar[d]&\\
&&0&0&
}\]
Then there exists a map $P'\to N$ making the diagram commute, and the Snake Lemma tells us that this map is surjective with kernel $M$. The object $P'$ is projective since the middle column splits, and hence $N\in \Cosyz{1}{M}$.
\end{proof}

Denote projective dimension of $M$ by $\pd M$ and stable isomorphism by $\approx$. 

\begin{prp}\label{main}
Let $M$ be an object in $\sfA$, let $0 \to N' \to N \to N''\to 0$ be an exact sequence in $\sfA$, and let $n\geq 1$ be an integer.
\begin{parts}
\item If $\pd N\leq n$, then $N'\in \StCosyz{n}{M}$ if and only if $N''\in \StCosyz{n+1}{M}$. 
\item If $\pd N''\leq n$, then $N\in \StCosyz{n}{M}$ if and only if $N'\in \StCosyz{n}{M}$.
\item If $\pd N'\leq n-1$, then $N\in \StCosyz{n}{M}$ if and only if $N''\in \StCosyz{n}{M}$.
\end{parts}
Furthermore:
\begin{parts}
\item[\emph{(d)}] If $\pd N\leq n$ and $N'\in \Cosyz{n}{M}$, then $N''\in \Cosyz{n+1}{M}$. 
\item[\emph{(e)}] If $\pd N''\leq n-1$, then $N\in \Cosyz{n}{M}$ if and only if $N'\in \Cosyz{n}{M}$.
\item[\emph{(f)}] If $\pd N'\leq n-1$ and $N''\in \Cosyz{n}{M}$, then $N\in \Cosyz{n}{M}$.
\end{parts}
\end{prp}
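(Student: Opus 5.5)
The plan is to handle the stable statements (a)--(c) with Lemma \ref{lem_cos_syz}, which turns membership in $\StCosyz{k}{M}$ into the condition that the $k$\textsuperscript{th} syzygy of the object is stably isomorphic to $M$. The key tool is the horseshoe lemma applied to $0\to N'\to N\to N''\to 0$. Choose projective resolutions $P'\to N'$ and $P''\to N''$; the horseshoe lemma gives a resolution $P'\oplus P''\to N$. For every $k$ it also gives a short exact sequence of syzygies
\[0\to \Co{k}{P'}\to \Co{k}{P'\oplus P''}\to \Co{k}{P''}\to 0.\]
We also use the long exact sequence in the other direction: the kernel of $P_0\to N''$, where $P_0\to N$ is surjective, has a short exact sequence with $N'$. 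Two standard facts about an object of projective dimension at most $k$ will be used. Its $k$\textsuperscript{th} syzygy is projective, and its $j$\textsuperscript{th} syzygy for $j<k$ has projective dimension at most $k-j$. Both follow from Schanuel's Lemma \ref{schanuel}, since one resolution can be truncated to projectives.

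(a) Take a projective $P_0\to N$ and set $K=\ker(P_0\to N\to N'')$. The snake lemma gives $0\to K_N\to K\to N'\to 0$, where $K_N=\Omega N$ has $\pd K_N\le n-1$. Then $K$ is a first syzygy of $N''$, so the $(n+1)$\textsuperscript{th} syzygy of $N''$ is the $n$\textsuperscript{th} syzygy of $K$. It therefore suffices to show that the $n$\textsuperscript{th} syzygies of $K$ and $N'$ are stably isomorphic when $\pd K_N\le n-1$. This is case (c) applied at the level of syzygies. The horseshoe sequence $0\to\Co{n}{}K_N\to \Co{n}{}K\to\Co{n}{}N'\to0$ has projective first term, since $\pd K_N\le n-1<n$, and it splits once we know $\Ext^1(\Co{n}{}N',\text{proj})$ vanishes. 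I expect the splitting to be the main obstacle. The fix I will try first is to choose resolutions of $K_N$ of length $\le n-1$, so that $\Co{n-1}{}K_N$ is already projective. At stage $n-1$ one then has $0\to Q\to \Co{n-1}K\to \Co{n-1}N'\to0$ with $Q$ projective. Taking one more syzygy kills $Q$ via the horseshoe, giving $\Co{n}{}K\approx \Co{n}{}N'$ directly without any $\Ext$ argument.

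(b) and (c) follow the same pattern. In the horseshoe sequence at stage $n$ (for (b)) or $n-1$ (for (c)), the projective-dimension hypothesis forces one end term to be projective. When the quotient end is projective the sequence splits outright. When the sub end is projective, pass one syzygy further and it disappears. In both cases the remaining two syzygies are stably isomorphic, and Lemma \ref{lem_cos_syz} together with transitivity of $\approx$ gives the equivalence.

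For (d)--(f) I will construct the exact sequences directly instead of going through syzygies.

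(d) Splice $0\to M\to P_{-1}\to\cdots\to P_{-n}\to N'\to0$ with $0\to N'\to N\to N''\to0$. Then replace $N$ by a finite projective resolution $0\to Q_n\to\cdots\to Q_0\to N\to0$, using a mapping cone or pullback construction as in the proof of Lemma \ref{basics2}.

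(e) and (f) go the same way, with pullbacks along $P_{-n}\to N''$. In (f), the pullback $E$ of $P_{-n}\to N''$ and $N\to N''$ sits in $0\to N'\to E\to P_{-n}\to0$, which splits, so $E\cong N'\oplus P_{-n}$. The $n$ copies of $N'$'s resolution of length $\le n-1$ are then absorbed into the earlier terms by Lemma \ref{basics2}-style adjustments. Part (e) uses the analogous pullback with $\pd N''\le n-1$.
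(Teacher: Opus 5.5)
\textbf{Parts (a)--(c).} These are sound. For (b) and (c) your argument is essentially the paper's. In (c), ``pass one syzygy further and it disappears'' needs the resolution of $N'$ to stop at degree $n-1$, as in your fix for (a). With an arbitrary continuation that syzygy is only projective, not zero.

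For (a) you take a valid alternative route. You pass to $K=\ker(P_0\to N'')$, a first syzygy of $N''$ sitting in $0\to\Omega N\to K\to N'\to 0$ with $\pd\Omega N\le n-1$, and apply the (c)-type argument to this sequence. The paper instead glues the left column and bottom row of the horseshoe diagram at $K''$, using that the middle $n$th syzygy is projective when $\pd N\le n$. That exhibits $K'$ directly as both an $n$th syzygy of $N'$ and an $(n+1)$st syzygy of $N''$, so it is one step shorter.

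\textbf{The gap is in (d)--(f).} There you give sketches rather than arguments.

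In (f), the pullback $E\cong N'\oplus P_{-n}$ is correct. But ``absorbed by Lemma \ref{basics2}-style adjustments'' does no work. That lemma only adds or removes a projective summand of a first cosyzygy; it cannot replace the non-projective summand $N'$ of the middle term. You would need an induction on $n$: cover $N'$ by a projective and apply (f) at level $n-1$.

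The mapping-cone idea in (d) can be made to work, but it is not carried out.

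Most importantly, (e) has no argument, and the implication $N\in\Cosyz{n}{M}\Rightarrow N'\in\Cosyz{n}{M}$ needs a genuine idea, since you must descend to a subobject. The paper's argument runs as follows:
\begin{itemize}
\item Take $P\to N$ with $\Co{n}{P}=M$ and let $K''=\ker(P_0\to N'')$; this is the pullback of $N'\subseteq N$ along $P_0\to N$.
\item The snake lemma gives $0\to\Co{1}{P}\to K''\to N'\to 0$, with $\Co{1}{P}\in\Cosyz{n-1}{M}$ and $\pd K''\le n-2$.
\item Then (d) at level $n-1$ yields $N'\in\Cosyz{n}{M}$. For $n=1$, $K''$ is projective and $\Co{1}{P}=M$, so the conclusion is immediate.
\end{itemize}

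The remaining non-stable statements come for free from the horseshoe you already use, because the horseshoe lemma accepts arbitrary resolutions of the end terms. Take the resolution whose $n$th syzygy is literally $M$. For the object of projective dimension $\le n-1$, take one of length $\le n-1$, so its $n$th syzygy is $0$. This gives (f) and the implication $N'\in\Cosyz{n}{M}\Rightarrow N\in\Cosyz{n}{M}$ in (e) immediately. Running your own proof of (a) with these choices gives $K\in\Cosyz{n}{M}$. Splicing with $0\to K\to P_0\to N''\to 0$ then gives $N''\in\Cosyz{n+1}{M}$, which is (d).
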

\begin{proof}
Let $P'$ and $P''$ be projective resolutions of $N'$ and $N''$, respectively, and extend these by the Horseshoe Lemma to a projective resolution $P$ of $N$ so that there is a commutative diagram with exact rows and columns:
\begin{align}\label{HLdiag}
\xymatrix{
& 0 \ar[d] & 0\ar[d]& & 0 \ar[d] & 0\ar[d]  & \\
0 \ar[r] & K' \ar[r]\ar[d] &P'_{n-1}\ar[r]\ar[d] &\cdots \ar[r] & P'_0 \ar[r]\ar[d] & N' \ar[r]\ar[d] & 0\\
0 \ar[r] & K \ar[r]\ar[d] & P_{n-1}\ar[r]\ar[d] &\cdots \ar[r] & P_0 \ar[r]\ar[d] & N \ar[r]\ar[d] & 0\\
0 \ar[r] & K'' \ar[r]\ar[d] &P''_{n-1} \ar[r]\ar[d] &\cdots \ar[r] & P''_0 \ar[r]\ar[d]  & N'' \ar[r]\ar[d] & 0 \\
&0&0 & &0&0 & 
}
\end{align}

First assume that $\pd N\leq n$. Glue together the left (nonzero) column and bottom (nonzero) row of this diagram at $K''$ to obtain an exact sequence
\begin{equation}\label{glue_np1}
\xymatrix{0\ar[r] & K'\ar[r] & K \ar[r] & P''_{n-1}\ar[r] & \cdots \ar[r]& P''_0\ar[r]& N''\ar[r] & 0.}
\end{equation}
Note that $K$ is projective since $\pd N\leq n$. From \eqref{HLdiag} and \eqref{glue_np1}, we see that $K'$ is an $n$\textsuperscript{th} syzygy of $N'$ and is an $(n+1)$\textsuperscript{st} syzygy of $N''$. Thus Lemma \ref{lem_cos_syz} yields that $N'\in \StCosyz{n}{M}$ if and only if $K'$ is stably isomorphic to $M$, which holds if and only if $N''\in \StCosyz{n+1}{M}$. Thus part (a) holds, and part (d) follows by taking $P'$ to be a projective resolution such that $K'=M$.

For part (b), assume that $\pd N''\leq n$. In this case, $K''$ is projective, and so the left column of \eqref{HLdiag} splits. Thus $K\cong K'\oplus K''$, that is, $K$ and $K'$ are stably isomorphic.  In particular, $K\approx M$ if and only if $K'\approx M$. It now follows from Lemma \ref{lem_cos_syz} that $N\in \StCosyz{n}{M}$ if and only if $N'\in \StCosyz{n}{M}$. 

Next, for part (e), suppose that in fact $\pd N''\leq n-1$. If $N'\in \Cosyz{n}{M}$, then in \eqref{HLdiag} we may choose $P'$ and $P''$ so that $K'=M$ and $K''=0$, and it follows that $N\in \Cosyz{n}{M}$. 
For the converse, assume that $N\in \Cosyz{n}{M}$. The case $n=1$ follows from Lemma \ref{basics2} since then $N''$ is projective and $N\cong N'\oplus N''$. Now let $n\geq 2$.
Let $P\xrightarrow{\sim} N$ be a projective resolution such that $\Co{n}{P}=M$. In particular, $K= \Co{1}{P}\in \Cosyz{n-1}{M}$, and there is a commutative diagram with exact rows:
\[\xymatrix{
0 \ar[r] & K \ar[r]\ar[d] & P_0 \ar[r] \ar[d]^{=}& N \ar[r]\ar[d] & 0\\
0 \ar[r] & K''\ar[r] & P_0\ar[r] & N''\ar[r] & 0
}\]
The Snake Lemma shows that the map $K\to K''$ is an injection with cokernel $N'$, i.e., there is an exact sequence $0\to K\to K''\to N'\to 0$. Now apply part (d) to this exact sequence: $\pd K''\leq n-2\leq n-1$ and $K\in \Cosyz{n-1}{M}$, so (d) yields $N'\in \Cosyz{n}{M}$ as desired.

Finally, assume that $\pd N'\leq n-1$. In this case, we may assume that the projective resolution $P'$ of $N'$ was chosen so that $P'_n=0$, that is, $K'=0$ in \eqref{HLdiag}. Thus $K\cong K''$, and it again follows from Lemma \ref{lem_cos_syz} that $N\in \StCosyz{n}{M}$ if and only if $N''\in \StCosyz{n}{M}$, so (c) holds. In particular, if $N''\in \Cosyz{n}{M}$, then we can choose $P''$ so that $K''=M$, and so $N\in \Cosyz{n}{M}$, hence (f) follows.
\end{proof}

\begin{rmk}
In the setting of Proposition \ref{main}, the converses of the statements in (d) and (f) do not hold in general. 
For example, consider the local ring $R=k\llbracket x,y\rrbracket$ where $k$ is a field, and define $L$ by the exact sequence $0\to R\xrightarrow{x}R\xrightarrow{\pi}L\to 0$.
\begin{enumerate}
\item There is an exact sequence $0\to L\oplus L\xrightarrow{\tiny \begin{pmatrix}y & 0 \\ 0 & 1 \end{pmatrix}} L\oplus L\to k\to 0$.
Observe that $\pd L\oplus L=1$ and $k\in \Cosyz{2}{R}$, but that $L\oplus L\not\in\Cosyz{1}{R}$: Indeed, the minimal projective resolution $0\to R^2 \xrightarrow{x} R^2 \to L\oplus L \to 0$ shows that if $a$ is an integer such that $L\oplus L\in \Cosyz{1}{R^a}$, then $a\geq 2$.
\item There is an exact sequence $0\to R \xrightarrow{\tiny\begin{pmatrix}x \\ 0\end{pmatrix}} R\oplus L \xrightarrow{\tiny \begin{pmatrix}\pi & 0 \\ 0 & 1 \end{pmatrix}} L\oplus L \to 0$. As before, $L\oplus L\not\in \Cosyz{1}{R}$, even though $R\oplus L \in \Cosyz{1}{R}$.
\end{enumerate}
\end{rmk}

We now can show that any two (stable) first cosyzygies are related by another (stable) first cosyzygy. Part of the next result also recovers \cite[Remark 2.4]{Cro13}. 

\begin{prp}\label{first}
Let $M$, $N$, and $N'$ be objects in $\sfA$. Then $N,N'\in \StCosyz{1}{M}$ if and only if there exist exact sequences
$$0\to P \to X\to N \to 0 \quad \text{and}\quad 0 \to P'\to X \to N'\to 0$$
where $P$ and $P'$ are projective and $X\in \StCosyz{1}{M}$. 
Moreover, it also holds that if $N,N'\in \Cosyz{1}{M}$, then $X\in \Cosyz{1}{M}$.
\end{prp}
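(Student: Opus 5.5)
The "if" direction follows from Proposition \ref{main}. Given $0\to P\to X\to N\to 0$ with $P$ projective and $X\in\StCosyz{1}{M}$, we have $\pd P=0\leq n-1$ for $n=1$. So Proposition \ref{main}(c) gives $X\in\StCosyz{1}{M}$ if and only if $N\in\StCosyz{1}{M}$, and hence $N\in\StCosyz{1}{M}$. The same argument with $P'$ gives $N'\in\StCosyz{1}{M}$.

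For the converse, we reduce to actual cosyzygies. Suppose $N\in\Cosyz{1}{M\oplus Q}$ and $N'\in\Cosyz{1}{M\oplus Q'}$ with $Q,Q'$ projective. Adding the complexes $Q'\xrightarrow{=}Q'$ and $Q\xrightarrow{=}Q$ in the appropriate degrees, both $N$ and $N'$ become first cosyzygies of $L:=M\oplus Q\oplus Q'$. So we may assume exact sequences
\[0\to L\xrightarrow{\alpha} E\to N\to 0\quad\text{and}\quad 0\to L\xrightarrow{\alpha'} E'\to N'\to 0\]
with $E,E'$ projective. The natural candidate for $X$ is the pushout of $\alpha$ and $\alpha'$, namely
\[X=\coker\bigl(L\xrightarrow{(\alpha,-\alpha')}E\oplus E'\bigr).\]
The map $(\alpha,-\alpha')$ is a monomorphism because $\alpha$ is. Hence $X\in\Cosyz{1}{L}\subseteq\StCosyz{1}{M}$.

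The pushout square gives maps $E'\to X$ and $E\to X$. The standard properties of pushouts of monomorphisms in an abelian category say the following. The map $E'\to X$ is a monomorphism, since it is the pushout of the mono $\alpha$. Its cokernel is $\coker\alpha=N$. This produces $0\to E'\to X\to N\to 0$, and symmetrically $0\to E\to X\to N'\to 0$. Taking $P=E'$ and $P'=E$ gives the required sequences. The only step needing care is verifying these pushout facts. One can do this with the Snake Lemma applied to the map of short exact sequences from $0\to L\to L\oplus E'\to E'\to 0$ to $0\to \im(\alpha,-\alpha')\to E\oplus E'\to X\to 0$, or simply cite the standard result.

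For the "moreover" part, if $N,N'\in\Cosyz{1}{M}$, then no stabilization is needed and we can take $Q=Q'=0$, so $L=M$. The construction then gives $X\in\Cosyz{1}{M}$ directly. The main point to watch in the stable case is that $X$ is only a cosyzygy of $M\oplus Q\oplus Q'$. That is, however, exactly the definition of a stable first cosyzygy of $M$, so nothing more is required.
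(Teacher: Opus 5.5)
Your proof is correct and follows the paper's argument. You stabilize both sequences to a common $L=M\oplus Q\oplus Q'$, take the pushout (your sequence $0\to L\to E\oplus E'\to X\to 0$ is exactly the paper's induced sequence), read off the two required short exact sequences, and get the converse from Proposition \ref{main}(c) with $n=1$. For the ``moreover'' part, your choice $Q=Q'=0$ is the paper's stated alternative; the paper's main argument uses Proposition \ref{main}(f) instead, which shows that \emph{any} $X$ fitting into $0\to P\to X\to N\to 0$ with $N\in\Cosyz{1}{M}$ lies in $\Cosyz{1}{M}$, not just the constructed one.
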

\begin{proof}
First assume that $N$ and $N'$ are stable first cosyzygies of $M$. Then there are exact sequences 
$$0\to M\oplus Q \to P\to N \to 0 \quad \text{and}\quad 0 \to M\oplus Q'\to P' \to N'\to 0$$
with $P,P',Q,Q'$ projective. There is a pushout diagram:
\[\xymatrix{
&0\ar[d]&0\ar[d]&&\\
0 \ar[r] & M\oplus Q\oplus Q' \ar[r] \ar[d] & P\oplus Q' \ar[r] \ar[d]& N \ar[r] \ar[d]^{\cong}& 0\\
0 \ar[r] & P'\oplus Q \ar[r] \ar[d] & X \ar[r] \ar[d]& N \ar[r] & 0\\
& N' \ar[r]^{\cong}\ar[d] & N'\ar[d] &&\\
&0&0&&
}\]
This diagram induces an exact sequence
$$0 \to M\oplus Q \oplus Q' \to P\oplus Q'\oplus P'\oplus Q \to X \to 0$$
showing that $X$ is a stable first cosyzygy of $M$. Moreover, the middle row and middle column give the desired exact sequences. The converse follows directly from Proposition \ref{main}(c) by taking $n=1$.

Moreover, if $N,N'\in \Cosyz{1}{M}$, then $X\in \Cosyz{1}{M}$ by Proposition \ref{main}(f). Alternatively, one can take $Q=0=Q'$ above to see that $X\in \Cosyz{1}{M}$. 
\end{proof}

\begin{rmk}\label{fpd}
Let $Q$ be a projective object and $n\geq 1$. The objects in $\StCosyz{n}{Q}$ are precisely those objects $L\in \sfA$ having projective dimension $\pd L\leq n$. Indeed, if $L\in \StCosyz{n}{Q}$, then $\pd L\leq n$ by definition; for the converse, take a projective resolution $P\xrightarrow{\sim} L$ of length $\pd L$ and add to it the contractible complex $Q\xrightarrow{=}Q$ in degrees $n$ and $n-1$ to see that $L\in \StCosyz{n}{Q}$.
\end{rmk}

\section{Canonical first cosyzygy modules}\label{sec_canonical1st}
Throughout the remainder of this paper, let $R$ be a commutative noetherian ring. We apply the ideas of Section \ref{sec_abelian} to the category $\sfA=\mod{R}$ of finitely generated $R$-modules. In this paper, by ``$R$-module" we always mean a \emph{finitely generated} $R$-module.

In light of Remark \ref{fpd}, given an $R$-module $M$ having a stable first cosyzygy $N$, any module of the form $N\oplus L$ for an $R$-module $L$ with $\pd_RL\leq 1$ is also a stable first cosyzygy of $M$. Thus it is too much to ask to classify all stable first cosyzygies, and so we instead turn our focus to (non-stable) cosyzygy modules. As a starting point, we study first cosyzygy modules, and introduce the following:

\begin{dfn}
Let $M$ and $X$ be $R$-modules such that $X$ is a first cosyzygy of $M$. We say $X$ is \emph{canonical} if $\Ext_R^1(X,R)=0$.
\end{dfn}

We recall some preliminaries: Let $M$ be an $R$-module. Denote the $R$-dual of $M$ by $M^*=\Hom_R(M,R)$. There is a canonical map $\varphi:M\to M^{**}$ given by $\varphi(m)=\ev_m$, where for $f\in M^*$, the evaluation map is defined as $\ev_m(f)=f(m)$. As in \cite{Bas60}, say that $M$ is \emph{torsionless} if $\varphi$ is an injection, and \emph{reflexive} if $\varphi$ is an isomorphism.  Each torsionless $R$-module embeds into a free $R$-module. This is due to a classic construction \cite{AB69}, and we record the argument here that shows the cokernel of this embedding is in fact a canonical first cosyzygy of $M$.

\begin{prp}\label{1st_cosyz_exists}
Every torsionless $R$-module has a canonical first cosyzygy.
\end{prp}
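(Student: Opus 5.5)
The plan is to use the classical Auslander--Bridger embedding built from generators of the dual. Let $M$ be a torsionless $R$-module. Since $M^*$ is finitely generated, choose generators $f_1,\dots,f_m$ of $M^*$. Equivalently, choose a surjection $\pi\colon R^m\to M^*$ sending the standard basis vector $e_i$ to $f_i$. Dualizing gives an injection $\pi^*\colon M^{**}\to (R^m)^*\cong R^m$. Composing with $\varphi\colon M\to M^{**}$ gives
\[
\iota=\pi^*\varphi\colon M\to R^m,\qquad \iota(x)=(f_1(x),\dots,f_m(x)).
\]
This map is injective because $\varphi$ is injective (torsionlessness) and $\pi^*$ is injective (since $\pi$ is onto). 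Set $X=\coker\iota$, so there is an exact sequence $0\to M\xrightarrow{\iota} R^m\to X\to 0$. By definition $X$ is then a first cosyzygy of $M$.

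It remains to show $\Ext_R^1(X,R)=0$. Applying $\Hom_R(-,R)$ to the short exact sequence yields the exact sequence
\[
0\to X^*\to (R^m)^*\xrightarrow{\iota^*} M^*\to \Ext_R^1(X,R)\to \Ext_R^1(R^m,R)=0 .
\]
Hence $\Ext^1_R(X,R)\cong\coker\iota^*$, and it suffices to show $\iota^*$ is surjective. Identify $(R^m)^*$ with $R^m$ via the dual basis $e_i^*$. Then $\iota^*(e_i^*)=e_i^*\circ\iota=f_i$. Since the $f_i$ generate $M^*$, the map $\iota^*$ is onto, and therefore $\Ext_R^1(X,R)=0$.

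The only point needing care is the identification $\iota^*=\pi$ (up to the canonical isomorphism $(R^m)^{**}\cong R^m$), i.e.\ checking that the composite $\pi^*\varphi$ really is the map $x\mapsto(f_i(x))_i$. This is a direct evaluation: $\pi^*(\ev_x)=\ev_x\circ\pi$ sends $e_i$ to $f_i(x)$. I do not expect any real obstacle beyond this bookkeeping; finite generation of $M^*$ uses only that $R$ is noetherian and $M$ is finitely generated.
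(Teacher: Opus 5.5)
Your proposal is correct and follows the paper's proof: both embed $M$ via $\pi^*\varphi$ into the dual of a free/projective module surjecting onto $M^*$, and both obtain $\Ext^1_R(X,R)=0$ from the surjectivity of $(\pi^*\varphi)^*$. The only difference is that you check this surjectivity by the coordinate computation $e_i^*\circ\iota=f_i$, whereas the paper uses the identity $\varphi^*\psi=\id_{M^*}$ for the canonical maps, which lets it work with an arbitrary projective $P$.
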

\begin{proof}
Let $M$ be a torsionless $R$-module.
Let $\pi:P\to M^*$ be a surjection with $P$ (finitely generated) projective. The dual map $\pi^*:M^{**}\to P^*$ is then an injection. Since $M$ is torsionless, the canonical map $\varphi:M\to M^{**}$ is an injection as well.  Thus there is an exact sequence
\begin{equation}
\label{ses}
\xymatrix{
0\ar[r] & M \ar[r]^{\pi^*\varphi} & P^*\ar[r] & X\ar[r] & 0
}
\end{equation}
We claim that $X$ is a canonical first cosyzygy of $M$. Dualizing $\pi^*\varphi$, there is a commutative diagram,
\[\xymatrix{
P\ar[r]^{\pi} \ar[d]^{\psi'}_{\cong} & M^*\ar[d]^{\psi}\\
P^{**}\ar[r]^{\pi^{**}}\ar[d]_{=} & M^{***} \ar[d]^{\varphi^*}\\
P^{**}\ar[r]^{(\pi^{*}\varphi)^*} & M^*
}\]
where $\psi:M^*\to M^{***}$ and $\psi':P\to P^{**}$ are the canonical maps. A diagram chase shows that $\varphi^*\psi=\id_{M^*}$: Indeed, for any $f\in M^*$ and $m\in M$, one has
$$\varphi^*\psi(f)(m)=\varphi^*(\ev_f)(m)=\ev_f\varphi(m)=\ev_f\ev_m=\ev_m(f)=f(m).$$
Surjectivity of $\pi$ thus implies surjectivity of $(\pi^*\varphi)^*$. Now dualizing the exact sequence \eqref{ses} gives an exact sequence
\[
\xymatrix{
0\ar[r] & X^* \ar[r] & P^{**}\ar[r]^{(\pi^*\varphi)^*}  & M^*\ar[r] & \Ext_R^1(X,R)\ar[r] & 0
}\]
and so surjectivity of $(\pi^*\varphi)^*$ implies $\Ext_R^1(X,R)=0$, as desired.
\end{proof}
In fact, an $R$-module $M$ is torsionless if and only if some first cosyzygy exists. Indeed, if a first cosyzygy of $M$ exists, then there is an injection $M\to P$ with $P$ a (finitely generated) projective $R$-module, and the commutative diagram
\[\xymatrix{
M\ar[r] \ar[d]&  M^{**}\ar[d]\\
P\ar[r]^{\cong} & P^{**}
}\]
shows that $M\to M^{**}$ is an injection, thus $M$ is torsionless. 

\begin{rmk}
Let $M$ be a torsionless $R$-module.
\begin{enumerate}
\item If $R$ is a Gorenstein ring, the module $X$ in the proof of the previous result was referred to as a \emph{pushforward} of $M$ in \cite{HJW01}.
\item If $0\to M \to P \to X \to 0$ is an exact sequence of $R$-modules with $P$ projective, then $X$ is a canonical first cosyzygy of $M$ if and only if the map $M\to P$ is a flat pre-envelope. Moreover, $X$ is a canonical first cosyzygy of $M$ with no projective summand if and only if the map $M\to P$ is a flat envelope. See the argument in the proof of \cite[Proposition 6.6.8]{EJ00}.
\end{enumerate}
\end{rmk}

Canonical first cosyzygies satisfy a mapping property, in the following sense.

\begin{prp} \label{relation1cosyz}
Let $M$ be a torsionless $R$-module, and $N$ and $X$ be first cosyzygies of $M$, with $X$ canonical. Then there exists a short exact sequence of $R$-modules
$$
0\lra P \lra X\oplus Q \lra N\lra0
$$
where $P$ and $Q$ are projective.
\end{prp}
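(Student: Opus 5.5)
We will use the defining property $\Ext_R^1(X,R)=0$ of the canonical cosyzygy to lift maps, and then build the desired sequence by a pushout, in the spirit of Proposition \ref{first}. Fix exact sequences
$$0\to M\xrightarrow{\iota} F\to X\to 0\quad\text{and}\quad 0\to M\xrightarrow{j} G\to N\to 0$$
with $F$ and $G$ projective.

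\textbf{Step 1: lift $j$ through $\iota$.} Applying $\Hom_R(-,G)$ to the first sequence gives the exact sequence
$$\Hom_R(F,G)\to\Hom_R(M,G)\to \Ext^1_R(X,G).$$
Since $G$ is a direct summand of some $R^m$, we have $\Ext^1_R(X,G)$ a summand of $\Ext^1_R(X,R)^m=0$. Hence there is $\alpha:F\to G$ with $\alpha\iota=j$. This lifting is the key point, and it is the only place where canonicity enters.

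\textbf{Step 2: build the sequence.} Consider the map
$$\theta=\begin{pmatrix}\iota\\ j\end{pmatrix}:M\to F\oplus G.$$
It is injective because $\iota$ is. We compute its cokernel in two ways.

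First, the automorphism $\begin{pmatrix}1&0\\-\alpha&1\end{pmatrix}$ of $F\oplus G$ carries $\theta$ to $\begin{pmatrix}\iota\\0\end{pmatrix}$. Therefore
$$\coker\theta\cong X\oplus G.$$

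Second, project $F\oplus G\to G$. This composes with $\theta$ to give $j$, so we obtain a map of short exact sequences from $0\to M\xrightarrow{\theta}F\oplus G\to X\oplus G\to 0$ to $0\to M\xrightarrow{j}G\to N\to 0$, with identity on $M$ and the projection on the middle terms. By the Snake Lemma (as in Lemma \ref{basics2}), the induced map $X\oplus G\to N$ is surjective, and its kernel is isomorphic to the kernel of the projection, namely $F$. This yields the exact sequence
$$0\to F\to X\oplus G\to N\to 0.$$
Taking $P=F$ and $Q=G$ proves the claim.

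\textbf{Main obstacle.} The only substantive step is the lifting in Step 1, and it is exactly where the vanishing of $\Ext^1_R(X,R)$ is used. The remaining steps are routine diagram chasing. One should still check that the Snake Lemma identification of the kernel is correct; this holds because the vertical map on $M$ is the identity, so its kernel and cokernel are both zero.
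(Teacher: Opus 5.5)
Your proof is correct and is essentially the paper's argument. Your $\coker\theta$ is the pushout $X'$ of $F\leftarrow M\to G$ from Proposition \ref{first}, and your lift $\alpha$ (obtained from $\Ext^1_R(X,G)=0$) is exactly the splitting of $0\to G\to X'\to X\to 0$ that the paper invokes to get $X'\cong X\oplus G$; you then read off $0\to F\to X\oplus G\to N\to 0$ just as the paper does, only with the splitting and the Snake Lemma step written out explicitly rather than cited.
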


\begin{proof}
As $N,X\in\Cosyz{1}{M}$, Proposition \ref{first} yields a first cosyzygy $X'$ of $M$ and exact sequences $0\to P\to X'\to N\to 0$ and $0\to Q\to X'\to X\to 0$ of $R$-modules, where $P$ and $Q$ are projective.  Since $X$ is canonical, the second sequence splits, so $X'\cong X\oplus Q$. The first sequence then yields the desired exact sequence.
\end{proof}

As an immediate consequence, we see that canonical first cosyzygies (when they exist) are unique up to stable isomorphism:
\begin{cor}\label{stablyiso} 
Let $M$ be a torsionless $R$-module, and $X$ and $X'$ be canonical first cosyzygies of $M$. Then $X$ and $X'$ are stably isomorphic.
\end{cor}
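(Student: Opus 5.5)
The plan is to apply Proposition \ref{relation1cosyz} with $N=X'$, which is a first cosyzygy of $M$. This gives a short exact sequence
\[0\lra P\lra X\oplus Q\lra X'\lra 0\]
with $P$ and $Q$ projective. The goal is to show that this sequence splits. Splitting yields $X\oplus Q\cong X'\oplus P$, so $X$ and $X'$ are stably isomorphic.

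To get the splitting, I will use that $X'$ is itself canonical, so $\Ext_R^1(X',R)=0$. Since $P$ is finitely generated projective, it is a direct summand of some $R^m$. Additivity of $\Ext$ then gives $\Ext_R^1(X',P)=0$, because it is a direct summand of $\Ext_R^1(X',R^m)\cong\Ext_R^1(X',R)^m=0$. The class of the extension above lies in $\Ext_R^1(X',P)$, which vanishes, so the sequence splits.

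This is the same splitting argument already used in the proof of Proposition \ref{relation1cosyz}, with the roles of $X$ and $X'$ interchanged. The only point needing care is passing from $\Ext_R^1(-,R)=0$ to $\Ext_R^1(-,P)=0$. This is immediate here because all modules are finitely generated, so $P$ is a summand of a finite free module. I therefore expect no real obstacle.
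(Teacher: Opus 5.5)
Your proof is correct and matches the paper's: apply Proposition \ref{relation1cosyz} with $N=X'$, then split the resulting sequence $0\to P\to X\oplus Q\to X'\to 0$ using $\Ext_R^1(X',R)=0$. Your observation that $\Ext_R^1(X',P)=0$ because $P$ is a summand of a finite free module just makes explicit a step the paper leaves implicit.
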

\begin{proof}
Let $X,X'\in \Cosyz{1}{M}$ be canonical. Proposition \ref{relation1cosyz} yields an exact sequence $0\to P\to X\oplus Q\to X'\to 0$ of $R$-modules, where $P$ and $Q$ are projective. Since $X'$ is canonical, this splits, so $X\oplus Q\cong X'\oplus P$, as claimed.
\end{proof}

The following proposition shows how to explicitly construct a canonical cosyzygy module from any given cosyzygy module. This also provides an alternate proof of Proposition \ref{1st_cosyz_exists}.

\begin{prp}\label{stack} Consider the exact sequence of $R$-modules
\[\xymatrix{
P_2 \ar[r]^{\partial_2} & P_1 \ar[r]^{\partial_1} & P_0 \ar[r]^\epsilon & N \ar[r] & 0
}\]
where each $P_i$ is a (finitely generated) projective $R$-module and $M=\coker\partial_2$, so that $N$ is a first cosyzygy of $M$.
Choose elements $f_1,\dots,f_n\in\Hom_R(P_1,R)$ such that $f_i\partial_2=0$ for each $i=1,\dots,n$. Then there exists a commutative diagram
\[\xymatrix{
&&0\ar[d]&0\ar[d]&\\
&& R^n\ar[d]\ar[r]^{=} & R^n\ar[d] &\\
P_2 \ar[r]^{\partial_2}\ar[d]^{=} & P_1 \ar[r]^-{\tiny\begin{pmatrix}-f_1\\\vdots\\-f_n\\\partial_1\end{pmatrix}}\ar[d]^{=} & R^n\oplus P_0 \ar[r]^-{\epsilon'}\ar[d] & X \ar[r]\ar[d] & 0\\
P_2 \ar[r]^{\partial_2} & P_1 \ar[r]^{\partial_1} & P_0 \ar[r]^\epsilon\ar[d] & N \ar[r]\ar[d] & 0\\
&&0&0&
}\]
with exact rows and columns. Thus $X$ is another first cosyzygy of $M$.  If the cohomology classes 
of $f_1,\dots,f_n$ generate $\Ext^1_R(N,R)$, then $\Ext^1_R(X,R)=0$, and so $X$ is a canonical first cosyzygy of $M$.
\end{prp}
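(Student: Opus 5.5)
We define $X$ as the cokernel of $\partial_1' = (-f_1,\dots,-f_n,\partial_1)^T : P_1\to R^n\oplus P_0$ and take $\epsilon'$ to be the natural projection. The middle column is the split sequence $0\to R^n\to R^n\oplus P_0\to P_0\to 0$. The map $X\to N$ is induced by the projection $R^n\oplus P_0\to P_0$ followed by $\epsilon$; it is well defined because $\epsilon\partial_1=0$. The map $R^n\to X$ is the inclusion followed by $\epsilon'$. With these choices all squares commute by construction.

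Next I check that the top row $P_2\to P_1\to R^n\oplus P_0\to X\to 0$ is exact.
\begin{itemize}
\item Exactness at $R^n\oplus P_0$ and at $X$ holds by definition of $X$.
\item The composite $\partial_1'\partial_2$ vanishes, since $f_i\partial_2=0$ and $\partial_1\partial_2=0$.
\item If $\partial_1'(p)=0$, then $\partial_1(p)=0$, so $p\in\im\partial_2$.
\end{itemize}
This also shows $\ker\partial_1'=\im\partial_2$, so $\im\partial_1'\cong P_1/\im\partial_2=M$. Hence $0\to M\to R^n\oplus P_0\to X\to 0$ is exact, and $X$ is a first cosyzygy of $M$.

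For the right column, I apply the Snake Lemma to the map between the short exact sequences $0\to M\to R^n\oplus P_0\to X\to 0$ and $0\to M\to P_0\to N\to 0$. The left vertical map is the identity on $M$; this holds because both embeddings come from $P_1$, and the projection kills the $f_i$-coordinates. The middle vertical map is surjective with kernel $R^n$. The Snake Lemma then gives an exact sequence $0\to R^n\to X\to N\to 0$, which is compatible with the diagram.

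For the Ext claim, I compare dualized sequences. Applying $\Hom_R(-,R)$ to $0\to M\to P_0\to N\to 0$ identifies $\Ext^1_R(N,R)$ with $\coker(P_0^*\to M^*)$. Here $M^*$ is identified with $\{g\in P_1^* : g\partial_2=0\}$, and the map $P_0^*\to M^*$ sends $h$ to $h\partial_1$; under this identification the cohomology class of $f_i$ is the image of $f_i$ in this cokernel. Likewise, $\Ext^1_R(X,R)=\coker((R^n\oplus P_0)^*\to M^*)$, where the map sends $(a_1,\dots,a_n,h)$ to $-\sum a_if_i+h\partial_1$. The image of this map is therefore the span of the $f_i$ together with $\im(\partial_1^*)$. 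If the classes of the $f_i$ generate $\Ext^1_R(N,R)$, this image is all of $M^*$, so $\Ext^1_R(X,R)=0$ and $X$ is canonical.

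The main obstacle is the identification of $\Ext^1_R(N,R)$ and $\Ext^1_R(X,R)$ with these cokernels inside $\{g\in P_1^* : g\partial_2=0\}$. This needs care with the identification $M^*\cong\{g\in P_1^* : g\partial_2=0\}$ and with the fact that both $\Ext$ groups are computed via the same copy of $M$. That is exactly why the Snake Lemma step showing the left vertical map is the identity matters.
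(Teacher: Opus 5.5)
Your proposal is correct and follows essentially the same route as the paper. You take $X=\coker\partial_1'$, which is the same module as the paper's pushout of $\partial_1$ and $(f_1,\dots,f_n)^T$, and you check exactness at $P_1$ via $f_i\partial_2=0$. Your description of $\Ext^1_R(X,R)$ as $M^*/(\langle f_1,\dots,f_n\rangle+\im\partial_1^*)$, with $M^*=\{g\in P_1^*: g\partial_2=0\}$, is the paper's observation that every cycle $g$ can be written as $g=(r_1\ \cdots\ r_n\ h)\,\partial_1'$.

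One minor correction to your closing remark: that identification does not depend on the Snake Lemma step. Both short exact sequences use the same copy $M=P_1/\im\partial_2$ by construction.
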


\begin{proof} 
The pushout $X$ of the diagram
\[\xymatrix{
P_1 \ar[r]^{\partial_1}\ar[d]_-{\tiny\begin{pmatrix}f_1\\\vdots\\f_n\end{pmatrix}} & P_0 \\
R^n
}\]
explains exactness of the first row at the modules $R^n\oplus P_0$ and $X$, and exactness at $P_1$ follows from the assumption that $f_i\partial_2=0$. Commutativity of the diagram is then easy to verify. Thus $X$ is a first cosyzygy of $M$.

Now suppose that the cohomology classes of $f_1,\dots,f_n$ generate $\Ext^1_R(N,R)$ and that $g\in\Hom_R(P_1,R)$ satisfies $g\partial_2=0$. Then there exist 
$h\in\Hom_R(P_0,R)$ and $r_i\in R$ such that $g+r_1f_1+\cdots+r_nf_n=h\partial_1$.  In other words, 
for the map ${\tiny\begin{pmatrix}r_1&\cdots&r_n&h\end{pmatrix}}:R^n\oplus P_0\to R$ we have 
$g={\tiny\begin{pmatrix}r_1&\cdots&r_n&h\end{pmatrix}}\partial_1'$ where $\partial_1'={\tiny\begin{pmatrix}-f_1\\\vdots\\-f_n\\\partial_1\end{pmatrix}}$.  This shows that the cycle $g$ is also a boundary, and we see that $\Ext^1_R(X,R)=0$.
\end{proof}

We illustrate the proposition with some examples. The first shows that cosyzygies need not be stably isomorphic.

\begin{exa}\label{nonGor_canonical}
Consider the ring $R=k[x,y]/(x^2,xy)$, where $k$ is a field. There is an exact sequence
\[\xymatrix{
R\ar[r]^{x} & R \ar[r]^{y} & R \ar[r] & R/(y) \ar[r] & 0\;.
}\]
Then $R/(y)$ is a non-canonical first cosyzygy of $R/(x)$, as $\Ext_R^1(R/(y),R)\cong k$ (it is generated by $x$). Since $x^2=0$, we can follow Proposition \ref{stack} and ``stack" this generator on top of $y$ to get another exact sequence:
\[\xymatrix{
R\ar[r]^{x} & R \ar[r]^{\tiny\begin{pmatrix}x \\ y\end{pmatrix}} & R^2 \ar[r] & X \ar[r] & 0
}\]
where $X$ is a canonical first cosyzygy of $R/(x)$. Observe that $X$ is not stably isomorphic to $R/(y)$, since $\Ext_R^1(X,R)=0$ but $\Ext_R^1(R/(y),R)\not=0$ (and stable isomorphism is preserved by $\Ext_R^1(-,R)$).
\end{exa}

The next example gives another illustration of using the ``stacking" procedure in Proposition \ref{stack} to construct a canonical first cosyzygy.
\begin{exa}\label{example1}
Consider the ring $R=k[x,y]/(x^2,xy,y^2)$, where $k$ is a field. Set $M=(x,y)$, the maximal ideal. We consider the first cosyzygies of $M$. Of course one obvious cosyzygy of $M$ is $k$ itself:
\[\xymatrix{
0\ar[r] &  (x,y) \ar[r] & R \ar[r] & k\ar[r] & 0\;.
}\]
One can verify that $\Ext_R^1(k,R)\cong k^3$.
To construct another cosyzygy module, we find minimal generators of $\Ext_R^1(k,R)$.
Starting with a resolution of $k$, these are given by cycles $f:R^2\to R$ making the following diagram commute:
\[\xymatrix{
\cdots \ar[r] & R^4 \ar[rr]^{\tiny\begin{pmatrix}x&y&0&0\\0&0&x&y\end{pmatrix}} \ar[d]&& R^2\ar[r]^{\tiny\begin{pmatrix}x&y\end{pmatrix}}\ar[d]^f & R \ar[r]\ar[d] & k \ar[r]\ar[d]^= & 0\\
\cdots \ar[r] & 0 \ar[rr] && R \ar[r] & X_f\ar[r] & k \ar[r] & 0
}\]
where $X_f$ is the pushout of $f$ and ${\tiny\begin{pmatrix}x & y\end{pmatrix}}$. Three generators 
$f_i:R^2\to R$ of $\Ext_R^1(k,R)$, for $i=1,2,3$, are represented by the matrices: 
\[
{\tiny \begin{pmatrix}x & 0\end{pmatrix}}, \quad 
{\tiny \begin{pmatrix}y & 0\end{pmatrix}}, \quad
{\tiny \begin{pmatrix}0 & x\end{pmatrix}}
\]
For example, $X_1=\coker{\tiny\begin{pmatrix}x&0\\x&y\end{pmatrix}}$ is the first cosyzygy if we use Proposition \ref{stack} and stack the generator $f_1={\tiny \begin{pmatrix}x & 0\end{pmatrix}}$. In this case one computes
$\Ext_R^1(X_1,R)\cong k^2$ and that the cycles $f_2, f_3$ are also generators for $\Ext_R^1(X_1,R)$.
Similarly, one has that $X_2=\coker{\tiny\begin{pmatrix}y&0\\x&y\end{pmatrix}}$ and $X_3=\coker{\tiny\begin{pmatrix}0&x\\x&y\end{pmatrix}}$ are two other choices of non-isomorphic first cosyzygies of $M$.

Continuing to stack generators as Proposition \ref{stack}, one obtains the modules $X_{12}=\coker{\tiny\begin{pmatrix}y&0\\x&0\\x&y\end{pmatrix}}$, $X_{13}=\coker{\tiny\begin{pmatrix}0&x\\x&0\\x&y\end{pmatrix}}$ and $X_{23}=\coker{\tiny\begin{pmatrix}y&0\\0&x\\x&y\end{pmatrix}}$; these are three other choices of non-isomorphic first cosyzygies. Finally, using all of $f_1$, $f_2$, and $f_3$, we see that
\[
X=\coker{\tiny\begin{pmatrix}0&x\\y&0\\x&0\\x&y\end{pmatrix}}\cong
\coker{\tiny\begin{pmatrix}x&0\\y&0\\0&x\\0&y\end{pmatrix}}
\]
is a canonical first cosyzygy of $M$. 
\end{exa}

The last example in this section shows that stacking minimal generators of Ext can create free summands in the resulting first cosyzygy. It also shows that---even over a Gorenstein ring---one may have infinitely many non-isomorphic first cosyzygies (without free summands).

\begin{exa}\label{qc} Consider the ring $R=k[x,y,z]/(x^2-yz)$, where $k$ is a field. We construct cosyzygy modules of $M=\coker \tiny\setlength\arraycolsep{2pt}\begin{pmatrix}-y&x\\x&-z\end{pmatrix}$. 

The module $N=R/(x,y)$ has a free resolution beginning with
\[
\xymatrixcolsep{16mm}
\xymatrix{
R^2 \ar[r]^-{\tiny\setlength\arraycolsep{2pt}\begin{pmatrix}-y&x\\x&-z\end{pmatrix}} & R^2\ar[r]^{\tiny\setlength\arraycolsep{2pt}\begin{pmatrix}x&y\end{pmatrix}} & R \ar[r] & N \ar[r] & 0
}
\]
so $N$ is a first cosyzygy of $M$. One computes that $\Ext^1_R(N,R) \cong N \cong k[z]$. Also, if for each $n\geq 0$ we set $N_n=\coker{\tiny\begin{pmatrix} z^{n+1}& xz^n\\x&y\end{pmatrix}}$, then there is an exact sequence
\[
\xymatrixcolsep{20mm}
\xymatrix{
R^2 \ar[r]^-{\tiny\setlength\arraycolsep{2pt}\begin{pmatrix}-y&x\\x&-z\end{pmatrix}} & R^2\ar[r]^{\tiny\setlength\arraycolsep{2pt}\begin{pmatrix} z^{n+1}& xz^n\\x&y\end{pmatrix}} & R^2 \ar[r] & N_n \ar[r] & 0
}
\]
Suppose $n\geq 1$. Then $\Ext^1_R(N_n,R)\cong R/(x,y,z^n)$, which is a $k$-vector space of dimension $n$. A $k$-basis of $\Ext^1_R(N_n,R)$ is given by the cycles
${\tiny\begin{pmatrix}z^{i+1} & xz^i\end{pmatrix}}:R^2\to R$, $i=0,\dots,n-1$. If we stack the minimal generator ${\tiny\begin{pmatrix}z & x\end{pmatrix}}:R^2\to R$ 
we obtain $N_n'=\coker{\tiny\begin{pmatrix} z&x\\z^{n+1}&xz^n\\x&y\end{pmatrix}}$ which has a free summand, even though $N_n$ has no free summand. 

On the other hand, if $n=0$, then $\Ext_R^1(N_0,R)=0$ so $N_0$ is the canonical cosyzygy of $M$ with no free summand.

Note also that the sequence
\[
\xymatrixcolsep{16mm}
\xymatrix{
R^2 \ar[r]^-{\tiny\setlength\arraycolsep{2pt}\begin{pmatrix}-y&x\\x&-z\end{pmatrix}} & R^2\ar[r]^{\tiny\setlength\arraycolsep{2pt}\begin{pmatrix}z&x\end{pmatrix}} & R \ar[r] & N'' \ar[r] & 0
}
\]
is exact, and so is the beginning of a free resolution of $N''=R/(z,x)$. One computes that
$\Ext_R^1(N'',R)\cong N''\cong k[y]$. However, $N\ncong N''$.

In fact, for any nonzero $a\in R$, one can check that the following is exact:
\[
\xymatrixcolsep{16mm}
\xymatrix{
R^2 \ar[r]^-{\tiny\setlength\arraycolsep{2pt}\begin{pmatrix}-y&x\\x&-z\end{pmatrix}} & R^2\ar[r]^{\tiny\setlength\arraycolsep{2pt}\begin{pmatrix}ax&ay\end{pmatrix}} & R \ar[r] & N_a \ar[r] & 0
}
\]
yielding infinitely many non-isomorphic first cosyzygy modules of $M$.
\end{exa}

\section{Characterizing first cosyzygy modules}\label{sec_classify}
Assume that $R$ is a local ring in this section.  In this case projective modules are free.   As before, by ``$R$-module" we mean a finitely generated $R$-module. 

By Corollary \ref{stablyiso} we know that any two canonical first cosyzygy modules are stably isomorphic.  Over a local ring stably isomorphic modules with no free summands are isomorphic \cite[Chapter IV, Corollary 1.4]{Bas68}.  Therefore the canonical first cosyzygy module with no free summand is unique up to isomorphism.

However, as shown in Example \ref{qc}, canonical first cosyzygy modules constructed using Proposition \ref{stack} might contain free summands. The next lemma shows that we may split off any free summand of a first cosyzygy to obtain another with no free summand. If the original first cosyzygy was canonical, so is the new one, and it is therefore \emph{the} canonical first cosyzygy module with no free summand. (By ``no free summand" we of course always mean no nonzero free summand.)

\begin{lem}\label{freesummands} Consider the exact sequence of $R$-modules
\[\xymatrix{
F_2 \ar[r]^{\partial_2} & F_1 \ar[r]^{\partial_1} & F \ar[r]^\epsilon & N \ar[r] & 0
}\]
with $F$, $F_1$, and $F_2$ free. Then $N=X\oplus G$ for a free summand $G$ if and only if $F=F_0\oplus G'$ for free $R$-modules $F_0$ and $G'$ such that there is an isomorphism $\epsilon|_{G'}:G'\to G$ and $\pi_{G'}\partial_1=0$, where $\pi_{G'}:F\to G'$ is the natural projection.
In this case there is a commutative diagram
\[\xymatrix{
F_2 \ar[r]^{\partial_2}\ar[d]^{=} & F_1 \ar[r]^{\pi_{F_0}\partial_1}\ar[d]^{=} & F_0 \ar[r]^{\epsilon|_{F_0}}\ar[d]^\iota & X \ar[r]\ar[d] & 0\\
F_2 \ar[r]^{\partial_2} & F_1 \ar[r]^{\partial_1} & F \ar[r]^\epsilon & N \ar[r] & 0\\
}\]
where $\pi_{F_0}:F\to F_0$ and $\iota:F_0\to F$ are the natural projection and injection.
\end{lem}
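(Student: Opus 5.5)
The idea is to use the fact that $G$ is free, so every surjection onto it splits.

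For the forward direction, suppose $N = X \oplus G$ with $G$ free, and let $p: N \to G$ be the projection. Since $G$ is free and $p\epsilon: F \to G$ is surjective, there is a section $s: G \to F$ with $p\epsilon s = \id_G$. Set $G' = s(G)$ and $F_0 = \ker(p\epsilon)$. Then $F = F_0 \oplus G'$. Both summands are free: $G' \cong G$, and $F_0$ is a direct summand of a free module over a local ring, hence free.

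We must check that $\epsilon|_{G'}: G' \to G$ is an isomorphism. Here we are free to choose the complement $G \subseteq N$ compatibly, so it is cleanest to modify the decomposition of $N$ instead of $s$. Replace the summand $G$ of $N$ by $\epsilon s(G)$. This is again a complement to $X$, because $p$ restricted to $\epsilon s(G)$ is an isomorphism onto $G$, and therefore $N = X \oplus \epsilon s(G)$. With this choice, $\epsilon|_{G'}: G' \to \epsilon s(G)$ is an isomorphism, since $p\epsilon s = \id$ forces injectivity. Moreover $\epsilon(F_0) \subseteq \ker p = X$.

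Next, the projection $\pi_{G'}$ equals $s\,p\,\epsilon$, and $\epsilon\partial_1 = 0$, so $\pi_{G'}\partial_1 = 0$.

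For the converse, suppose $F = F_0 \oplus G'$ with $\pi_{G'}\partial_1 = 0$ and $\epsilon|_{G'}$ an isomorphism onto $G := \epsilon(G')$. Put $X := \epsilon(F_0)$. Then $N = X + G$. To see that the sum is direct, suppose $\epsilon(a) = \epsilon(b)$ with $a \in F_0$ and $b \in G'$. Then $a - b \in \im\partial_1$, so applying $\pi_{G'}$ gives $b = \pi_{G'}(a-b) = 0$. Hence $N = X \oplus G$ with $G \cong G'$ free.

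For the diagram, the condition $\pi_{G'}\partial_1 = 0$ means $\partial_1 = \iota\pi_{F_0}\partial_1$, so the middle square commutes. Exactness of the top row is checked directly:
\begin{itemize}
\item $\ker(\epsilon|_{F_0}) = F_0 \cap \ker\epsilon = F_0 \cap \im\partial_1$, and since $\im\partial_1 \subseteq F_0$, this equals $\im\partial_1 = \im(\pi_{F_0}\partial_1)$.
\item Exactness at $F_1$ holds because $\iota$ is injective, so $\ker(\pi_{F_0}\partial_1) = \ker\partial_1 = \im\partial_2$.
\item $\epsilon|_{F_0}$ is surjective onto $X$ by definition of $X$.
\end{itemize}
The map $X \to N$ is the inclusion of the summand, and the right square commutes.

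The main subtlety is the choice of complement in the forward direction: the statement implicitly allows $G$ to be replaced by an isomorphic complement of $X$. One should read "$N = X \oplus G$" up to this choice, or note that, with the section chosen as above, $\epsilon|_{G'}$ followed by $p$ is an isomorphism $G' \to G$.
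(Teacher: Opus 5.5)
Your argument is correct and is essentially the paper's. You split $F$ along a section $s$ of $p\epsilon$ with $F_0=\ker(p\epsilon)$, and for the converse you apply $\pi_{G'}$ to an element of $\im\partial_1$ to show that $\epsilon(F_0)+\epsilon(G')$ is direct.

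The one difference is the choice of $s$. The paper takes $s$ to be a lift of the inclusion $\iota_G\colon G\to N$ through $\epsilon$, which exists because $G$ is free. Then $\epsilon s=\iota_G$, so $\epsilon|_{G'}$ is an isomorphism onto the given summand $G$ itself, and the replacement of the complement that you flag as the main subtlety is not needed.
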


\begin{proof} 
First assume that there is a decomposition $N=X\oplus G$ with $G$ free. Lift the natural injection $\iota_G:G\to N$ along the surjection $\epsilon$ to get a map $s:G\to F$ satisfying $\epsilon s=\iota_G$. Set $G'=s(G)$ and $F_0=\ker \pi_G\epsilon$, where $\pi_{G}:N\to G$ is the natural projection, so that $F=F_0\oplus G'$. Now $\epsilon|_{G'}:G'\to G$ is an isomorphism. Letting $\pi_{G'}:F\to G'$ be the natural projection, we get $\im\partial_1=\ker\epsilon\subseteq \ker \pi_G\epsilon=F_0$ and so $\pi_{G'}\partial_1=0$.

Suppose now that $F=F_0\oplus G'$ where $\epsilon:G'\to G$ is an isomorphism and one has $\pi_{G'}\partial_1=0$. We first claim that $\epsilon|_{G'}$ is injective.  Indeed, let $x\in G'$ be such that
$\epsilon(x)=0$.  Then there exists $y\in\F_1$ such that $\partial_1(y)=x$.  Thus
$x=\pi_{G'}(x)=\pi_{G'}\partial_1(y)=0$. It follows now that $\epsilon(G')\subseteq N$ is a free submodule of $N$.  We claim moreover that it is a free summand.  In fact we'll show that $N=\epsilon(G')\oplus\epsilon(F_0)$. Since $\epsilon$ is surjective we have $N=\epsilon(F)=\epsilon(G')+\epsilon(F_0)$. Now suppose that $\epsilon(x)=\epsilon(z)$ for $x\in G'$ and $z\in F_0$. Then $x-z\in\ker\epsilon=\im\partial_1$. Write $x-z=\partial_1(y)$, for some $y\in F_1$. Then $x=\pi_{G'}(x-z)=\pi_{G'}\partial_1(y)=0$. Thus
$\epsilon(x)=\epsilon(z)=0$.

The commutative diagram follows since $\im\partial_1$ is completely contained in $F_0$.
\end{proof}

Note that two different conditions on cosyzygies arise. Lemma \ref{freesummands} detects a free summand of a cosyzygy by the condition $\pi_{G'}\partial_1=0$. On the other hand, in Proposition \ref{stack} the stacked summand $R^n$ of $R^n\oplus P_0$ satisfies the property that 
$\im \partial_1'\cap R^n=0$, where $\partial_1'={\tiny\begin{pmatrix}-f_1\\\vdots\\-f_n\\\partial_1\end{pmatrix}}$. (Indeed, in the setting of that proposition, if the image of $x\in P_1$ under 
$\partial_1'$ lies in $R^n$, then 
$\partial_1(x)=0$.  Therefore there exists $y\in P_2$ such that $x=\partial_2(y)$.  Therefore
$f_i(x)=f_i\partial_2(y)=0$ for $i=1,\dots,n$, showing that $\partial_1'(x)=0$.)
Together, these give a means of understanding \emph{all} first cosyzygy modules.
 
\begin{thm}\label{allcosyz} 
Assume that $R$ is local and $M$ is a torsionless $R$-module with no free summand. Let $X$ be the canonical first cosyzygy module of $M$ with no free summand and
\begin{equation}\label{Xseq}\xymatrix{
F_2 \ar[r]^{\partial_2} & F_1 \ar[r]^{\partial_1} & F_0 \ar[r]^\epsilon & X \ar[r] & 0
}\end{equation}
a minimal exact sequence of $R$-modules with $M=\coker\partial_2$ and each $F_i$ free. Then an $R$-module $N$ is a first cosyzygy module of $M$ if and only if $N\cong\coker\sigma\partial_1$ for some map $\sigma:F_0\to F$ to a free module $F$ with $\im\partial_1\cap \ker\sigma=0$.
\end{thm}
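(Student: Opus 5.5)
The plan is to prove the two implications separately. Both rest on the identification $M=\coker\partial_2\cong\im\partial_1$, which comes from exactness of \eqref{Xseq} at $F_1$: the map $\partial_1$ induces an isomorphism $\bar\partial_1\colon M\to\im\partial_1\subseteq F_0$ whose cokernel is $X$. The canonicity of $X$, namely $\Ext^1_R(X,R)=0$, will be used only in the ``only if'' direction. I do not expect to need minimality of \eqref{Xseq} or the absence of free summands; these only pin down $F_0$ and $X$.

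For the ``if'' direction, suppose $\sigma\colon F_0\to F$ satisfies $\im\partial_1\cap\ker\sigma=0$. I will compute $\ker(\sigma\partial_1)$. If $\sigma\partial_1(x)=0$, then $\partial_1(x)\in\im\partial_1\cap\ker\sigma=0$, so $\ker(\sigma\partial_1)=\ker\partial_1=\im\partial_2$. Hence $\sigma\partial_1$ induces an injection $M=F_1/\im\partial_2\to F$ whose image is $\im\sigma\partial_1$. This gives an exact sequence
\[0\to M\to F\to \coker\sigma\partial_1\to 0,\]
so $\coker\sigma\partial_1$, and any module isomorphic to it, is a first cosyzygy of $M$.

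For the ``only if'' direction, let $0\to M\xrightarrow{\iota}F\to N\to 0$ be exact with $F$ free (projectives are free since $R$ is local). Apply $\Hom_R(-,F)$ to $0\to M\xrightarrow{\bar\partial_1}F_0\to X\to 0$. Since $F$ is a finite free module, $\Ext^1_R(X,F)\cong\Ext^1_R(X,R)^{\oplus\operatorname{rank}F}=0$. So the restriction map $\Hom_R(F_0,F)\to\Hom_R(M,F)$ is surjective, and there is a $\sigma\colon F_0\to F$ with $\sigma\bar\partial_1=\iota$. Let $p\colon F_1\to M$ be the projection. Then $\sigma\partial_1=\sigma\bar\partial_1p=\iota p$, so $\im\sigma\partial_1=\iota(M)$ and $\coker\sigma\partial_1\cong F/\iota(M)\cong N$. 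Finally, an element of $\im\partial_1\cap\ker\sigma$ has the form $\bar\partial_1(m)$ with $\iota(m)=\sigma\bar\partial_1(m)=0$. Injectivity of $\iota$ gives $m=0$, so $\im\partial_1\cap\ker\sigma=0$.

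The main point is the extension step in the ``only if'' direction: the vanishing of $\Ext^1_R(X,R)$ lets every embedding of $M$ into a free module factor through the canonical embedding $M\to F_0$. Beyond that, the only care needed is bookkeeping: keeping the identification $M\cong\im\partial_1$ consistent, and checking that the cokernel computed for $\sigma\partial_1$ really is $N$ up to isomorphism.
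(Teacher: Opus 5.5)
Your proof is correct. It is not the route of the paper's main proof; it is essentially the alternative argument the authors sketch in the remark right after the theorem, where they extend the identity maps on $F_2,F_1$ to a map $\sigma\colon F_0\to F$ using $\Ext^1_R(X,R)=0$.

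The paper's main proof of the ``only if'' direction is constructive and splits into two cases.
\begin{enumerate}
\item If $N$ is canonical, it uses Corollary~\ref{stablyiso} and Bass cancellation to write $N\cong X\oplus G$, and takes $\sigma$ to be the natural inclusion.
\item Otherwise it presents $N$ as $F_2\to F_1\xrightarrow{\partial}F$ and stacks cocycles $f_1,\dots,f_n$ generating $\Ext^1_R(N,R)$, as in Proposition~\ref{stack}. This gives a canonical cosyzygy $X'\cong X\oplus G$. It then lifts this isomorphism to the minimal presentations and splits off $G$ via Lemma~\ref{freesummands}. The result is an embedding $\alpha\colon F_0\to R^n\oplus F$, and $\sigma$ is $\alpha$ followed by the projection onto $F$.
\end{enumerate}

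That route gives an explicit recipe for $\sigma$, which the authors use in their examples. It also shows every first cosyzygy arises from the canonical one by ``unstacking'' the extra generators. The price is that it relies on stable uniqueness of canonical cosyzygies, cancellation, and minimality, plus some care to present $N$ with the same $F_2\to F_1$ as $X$.

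Your argument replaces all of this with one extension step: $\Ext^1_R(X,F)=0$ makes $\Hom_R(F_0,F)\to\Hom_R(M,F)$ surjective, so any embedding $\iota\colon M\to F$ factors through $\bar\partial_1$. It is shorter and makes the hypotheses' roles transparent. As you note, it needs neither the minimality of \eqref{Xseq} nor the absence of free summands in $M$ or $X$. It also works verbatim over any commutative noetherian ring, with $F$ finitely generated projective and any canonical first cosyzygy presented as in \eqref{Xseq}.
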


\begin{proof} 
First note that the first syzygy in a minimal free resolution of $X$ is isomorphic to $M$ by Lemma \ref{lem_cos_syz} and \cite[Chapter IV, Corollary 1.4]{Bas68}; thus such a minimal sequence as in \eqref{Xseq} exists.

Let $N$ be a first cosyzygy of $M$. If $N$ is a canonical first cosyzygy of $M$, then $N$ is stably isomorphic to $X$ by Corollary \ref{stablyiso}, and thus $N\cong X\oplus G$ for some free module $G$ since $X$ has no free summand, by \cite[Chapter IV, Corollary 1.4]{Bas68}. Then observe that the natural injection $\sigma:F_0\to F_0\oplus G$ has the desired properties.
Now assume that $N$ is a non-canonical first cosyzygy of $M$, so that $\Ext_R^1(N,R)\not=0$. There exists an exact sequence of $R$-modules
\begin{equation}\label{Nseq}\xymatrix{
F_2 \ar[r]^{\partial_2} & F_1 \ar[r]^{\partial} & F \ar[r] & N \ar[r] & 0
}\end{equation}
where $F$ is another free module. As \eqref{Xseq} was minimal, we may take \eqref{Nseq} minimal also. Choose elements $f_1,\dots,f_n\in\Hom_R(F_1,R)$ whose cohomology classes generate $\Ext^1_R(N,R)$, and set $\partial'={\tiny\begin{pmatrix}-f_1\\\vdots\\-f_n\\\partial\end{pmatrix}}$.
Then we want to show that there exists a commutative diagram with exact rows:
\begin{equation}\label{bigdiag1}
\xymatrixcolsep{3pc}\xymatrix{
F_2 \ar[r]^{\partial_2}\ar[d]_{\cong}^{\varphi_2} & F_1 \ar[r]^{\partial_1}\ar[d]_{\cong}^{\varphi_1} & F_0 \ar[r]\ar[d]^\alpha & X \ar[r]\ar[d]^\beta & 0\\
F_2 \ar[r]^{\partial_2}\ar@{=}[d] & F_1 \ar[r]^{\partial'}\ar@{=}[d] & R^n\oplus F \ar[r] \ar[d]^\pi & X' \ar[r]\ar[d] & 0\\
F_2 \ar[r]^{\partial_2} & F_1 \ar[r]^{\partial} & F \ar[r] & N \ar[r] & 0\\
}
\end{equation}
The bottom two rows come from Proposition \ref{stack}, where $X'$ is canonical. As all entries of $f_i$ belong to the maximal ideal, the middle row is minimal. 
Commutativity and exactness of the top two rows are provided by diagram \eqref{bigdiag2} below as follows: First express the fact that $X'\cong X\oplus G$ for some free module $G$, and lift this map to an isomorphism $\varphi$ of the (minimal) free resolutions, then split off the free summand $G$ using Lemma \ref{freesummands} to get a map $\iota$ as displayed here:
\begin{equation}\label{bigdiag2}
\xymatrixcolsep{3pc}\xymatrix{
F_2\ar[r]^{\partial_2}\ar@{=}[d] & F_1 \ar[r]^{\partial_1}\ar@{=}[d] & F_0 \ar[r]\ar[d]^{\iota_0} & X \ar[r]\ar[d]^{\iota_{-1}}  & 0 \\
F_2\ar[r]^{\partial_2} \ar[d]^{\varphi_{2}}_{\cong} & F_1 \ar[r]^{\tiny\begin{pmatrix}\partial_1 \\ 0\end{pmatrix}} \ar[d]^{\varphi_{1}}_{\cong} & F_0\oplus G \ar[r]\ar[d]^{\varphi_{0}}_{\cong} & X\oplus G\ar[r]\ar[d]^{\varphi_{-1}}_{\cong} & 0 \\
F_2\ar[r]^{\partial_2} & F_1 \ar[r]^{\partial'} & R^n\oplus F \ar[r] & X' \ar[r] & 0
}
\end{equation}
Set $\alpha=\varphi_0\iota_0$ and $\beta=\varphi_{-1}\iota_{-1}$ to get the preceding diagram \eqref{bigdiag1}.
Now we let $\sigma=\pi\alpha$. Since $\partial_1$ and $\partial$ have the same kernel and the diagram commutes, it follows that 
$\im\partial_1\cap\ker\sigma =0$.  Finally, we see from \eqref{bigdiag1} that $\im\partial=\im\partial\varphi_1=\im \sigma\partial_1$, and so it follows that $N=\coker \partial=\coker\sigma\partial_1$.

On the other hand, suppose that $\sigma:F_0\to F$ is a map with $F$ free such that $\im\partial_1\cap\ker\sigma=0$. Then $\ker\partial_1=\ker\sigma\partial_1$ and so $N\cong \coker\sigma\partial_1$ is a first cosyzygy of $M$.
\end{proof}

\begin{rmk}
The proof of the forward implication of Theorem \ref{allcosyz} above explicitly uses the construction from Proposition \ref{stack}, which is useful for computing examples. Alternatively, this implication can be shown using the vanishing of $\Ext_R^1(X,R)$: First extend the identity maps on $F_i$ for $i\geq 1$ in the minimal free resolutions of $X$ and $N$ to a map $\sigma:F_0\to F$,
\[\xymatrix{
F_2 \ar[r]^{\partial_2} \ar@{=}[d] & F_1\ar[r]^{\partial_1} \ar@{=}[d] & F_0 \ar[r]\ar@{-->}[d]^{\sigma} & X \ar@{-->}[d]\ar[r] & 0\\
F_2 \ar[r]^{\partial_2} & F_1\ar[r]^{\partial} & F \ar[r] & N \ar[r] & 0
}\]
Then observe that $N=\coker \sigma\partial_1$ and since the restriction of $\sigma$ to $\im \partial_1$ is the same as its embedding into $F$, it follows that $\im\partial_1\cap \ker \sigma=0$.
\end{rmk}

We illustrate Theorem \ref{allcosyz} with two examples.

\begin{exa}\label{m2=0}
Consider the ring $R=k[x,y] /(x^2,xy,y^2)$ and module $M=(x,y)$ from Example \ref{example1}. We saw there that the canonical first cosyzygy of $M$ with no free summand is given by 
$X=\coker{\tiny\begin{pmatrix}x&0\\ y&0\\ 0&x \\ 0&y\end{pmatrix}}$. There is a commutative diagram
\[\xymatrix@C=4em{
\cdots \ar[r] & R^4 \ar[rr]^{\tiny\begin{pmatrix}x&y&0&0\\0&0&x&y\end{pmatrix}} \ar[d]^=&& R^2\ar[r]^{\partial_1=\tiny\begin{pmatrix}x&0\\ y&0\\ 0&x \\ 0&y\end{pmatrix}}\ar[d]^= & R^4 \ar[r]\ar[d]^\sigma & X \ar[r]\ar[d] & 0\\
\cdots \ar[r] & R^4 \ar[rr]^{\tiny\begin{pmatrix}x&y&0&0\\0&0&x&y\end{pmatrix}} && R^2 \ar[r]^{\tiny\begin{pmatrix}x & y\end{pmatrix}} & R\ar[r] & k \ar[r] & 0
}\]
where $\sigma$ is given by ${\tiny\begin{pmatrix}1&0&0&1\end{pmatrix}}$. Note that
$\im\partial_1\cap\ker\sigma=0$ and, in this case, the resulting first cosyzygy is $k$.  There are many other choices of $\sigma$, for example, that given by ${\tiny\begin{pmatrix}1&0&0&1\\0&1&0&0\\0&0&1&0\end{pmatrix}}$, which would yield the first cosyzygy 
$N=\coker{\tiny\begin{pmatrix}x&y\\y&0\\0&x\end{pmatrix}}$.
\end{exa}

\begin{exa}
Consider the ring $R=k\llbracket x,y\rrbracket/(xy)$, where $k$ is a field, and the module $M=R/(y)$. We claim that the first cosyzygy modules of $M$ are precisely those isomorphic to a module in the following set:
$$\left\{R/(x^n)\oplus R^m\mid \text{ $n\geq 1$ and $m\geq 0$}\right\}.$$

Set $X=R/(x)$ and note that there is an exact sequence
\[\xymatrix{
\cdots \ar[r]^{x} & R \ar[r]^{y} & R\ar[r]^{x} & R\ar[r] & X \ar[r] & 0\;.
}\]
Thus, as also $\Ext_R^1(X,R)=0$, we see that $X$ is the canonical first cosyzygy module of $M$ with no free summand.
By Theorem \ref{allcosyz}, the first cosyzygy modules of $M$ are precisely the $R$-modules $N$ fitting into the following commutative diagram with exact rows
\[\xymatrix{
\cdots \ar[r]^{x} & R \ar[r]^{y}\ar[d]^{=} & R\ar[r]^{x}\ar[d]^{=} & R\ar[r]\ar[d]^{\sigma} & X \ar[r] \ar[d]& 0\\
\cdots \ar[r]^{x} & R \ar[r]^{y} & R\ar[r]^{\partial} & R^t\ar[r] & N \ar[r] & 0
}\]
such that $(x)\cap \ker \sigma=0$ and $t\geq 1$. The map $\partial=\sigma x$ has the form $\partial={\tiny\begin{pmatrix}r_1\\\vdots\\ r_t\end{pmatrix}}$, where each $r_i\in (x)$. We can express the $r_i$ as power series in $x$, so let $n\geq 1$ be the largest integer such that $r_i\in (x^n)$ for all $i$. After a possible permutation, we may assume that $r_1=x^nu$, where $u$ is a unit of $R$, and hence we may assume that $r_1=x^n$. For $i\geq 2$, write $r_i=x^nq_i$ for some $q_i\in k\llbracket x \rrbracket$, thus row operations convert the defining matrix for $\partial$ into $\partial'={\tiny \begin{pmatrix}x^n \\ 0 \\ \vdots \\ 0\end{pmatrix}}$, and thus $N\cong \coker \partial'=R/(x^n)\oplus R^{t-1}$, where $n\geq 1$ and $t-1=m\geq 0$.
\end{exa}

We end the section by giving a condition on the map $\sigma:F_0\to F$ in Theorem \ref{allcosyz} for two first cosyzygies of $M$ to be isomorphic, and remarking on the assumption in Theorem \ref{allcosyz} that $M$ has no free summand.

\begin{prp} 
Let $M$ be a torsionless $R$-module with no free summand.
Two first cosyzygies $N$ and $N'$ of $M$ are isomorphic if and only if, for the associated maps 
$\sigma:F_0\to F$ and $\sigma':F_0\to F'$ from Theorem \ref{allcosyz}, we have isomorphisms $f:F\to F'$ and $g:F_1\to F_1$ such that $\sigma'\partial_1g=f\sigma\partial_1$.
\end{prp}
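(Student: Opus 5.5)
We work in the setting of Theorem \ref{allcosyz}: $R$ is local, and $F_2\xrightarrow{\partial_2}F_1\xrightarrow{\partial_1}F_0\to X\to 0$ is the minimal sequence for the canonical cosyzygy $X$. We have $N=\coker\sigma\partial_1$ and $N'=\coker\sigma'\partial_1$, with $\im\partial_1\cap\ker\sigma=0=\im\partial_1\cap\ker\sigma'$. Set $\partial=\sigma\partial_1$ and $\partial'=\sigma'\partial_1$. By the intersection conditions, $\ker\partial=\ker\partial_1=\im\partial_2$ and likewise for $\partial'$. Hence
\[F_2\xrightarrow{\partial_2}F_1\xrightarrow{\partial}F\to N\to 0 \quad\text{and}\quad F_2\xrightarrow{\partial_2}F_1\xrightarrow{\partial'}F'\to N'\to 0\]
are exact, and these are presentations of $N$ and $N'$.

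\emph{The ``if'' direction.} Suppose $f:F\to F'$ and $g:F_1\to F_1$ are isomorphisms with $\partial' g=f\partial$. Then $f$ maps $\im\partial$ into $\im\partial'$. Since $g$ is surjective, $f(\im\partial)=\im(\partial' g)=\im\partial'$. So the isomorphism $f$ carries $\im\partial$ onto $\im\partial'$ and induces an isomorphism $\coker\partial\cong\coker\partial'$, that is, $N\cong N'$.

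\emph{The ``only if'' direction.} Let $h:N\to N'$ be an isomorphism. Lift $h$ to a map $f:F\to F'$ of the free modules covering $N$ and $N'$. Then $f$ maps $\im\partial$ into $\im\partial'$. Because $F_1$ is free, we can lift $f\partial$ along $\partial'$ to get $g:F_1\to F_1$ with $\partial' g=f\partial$. The problem is that $f$ and $g$ need not be isomorphisms.

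\emph{Main obstacle: upgrading $f$ and $g$ to isomorphisms.} This is where minimality matters. If $\partial$ and $\partial'$ are minimal (image in $\fm F$, resp. $\fm F'$), the presentations are minimal free presentations. A map of minimal free resolutions lifting an isomorphism is itself an isomorphism, via Nakayama: $f\otimes k$ is the isomorphism $N/\fm N\cong N'/\fm N'$, and similarly in degree one, where $F_1\otimes k\cong \operatorname{Tor}_1(N,k)$ since $\ker\partial=\im\partial_2\subseteq\fm F_1$ by minimality of \eqref{Xseq}.

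In general $\partial$ need not be minimal; for example, free summands of $N$ appear as the summands $G$ in the canonical case. We handle this by decomposing $F=F^{\min}\oplus G$, so that $\partial$ maps into $F^{\min}$ with minimal image and $N\cong\coker(\partial|^{F^{\min}})\oplus G$; this is Lemma \ref{freesummands} applied to the free part. Since $R$ is local, $N\cong N'$ forces the minimal parts to be isomorphic and $G\cong G'$, by Krull--Schmidt for free summands \cite[Chapter IV, Corollary 1.4]{Bas68}. Apply the Nakayama argument to the minimal parts and take the direct sum with an isomorphism $G\to G'$. This produces isomorphisms $f$ and $g$ with $\partial'g=f\partial$, i.e.\ $\sigma'\partial_1 g=f\sigma\partial_1$.

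The delicate point is making sure the decomposition of $F$ is compatible with the image of $\partial$. If the image of $\partial$ is not contained in $\fm F$, we first perform a change of basis on $F$ (row operations) so that the non-minimal part is exhibited as a free summand of $N$. That row reduction is where I expect the bookkeeping to be heaviest.
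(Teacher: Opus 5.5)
Your ``if'' direction is correct. Your Nakayama/comparison argument for minimal presentations is exactly the paper's proof, which says ``the rows are minimal exact sequences\dots a standard comparison theorem argument yields isomorphisms''. The gap is in your treatment of what you call the general case.

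\textbf{Your example of non-minimality is not one.} When $N\cong X\oplus G$, the map $\sigma\partial_1$ is $\partial_1$ followed by the inclusion $F_0\to F_0\oplus G$. Its image still lies in $\fm(F_0\oplus G)$. Free summands of $N$ are fully compatible with minimality; that is what the condition $\pi_{G'}\partial_1=0$ in Lemma \ref{freesummands} expresses. So splitting them off and invoking cancellation is unnecessary, and your Nakayama argument applies to $N$ directly.

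\textbf{Genuine non-minimality cannot be handled as you describe.} Suppose $\im\partial\not\subseteq\fm F$. Row operations would not ``exhibit a free summand of $N$''. A unit entry lets you change bases so that $\partial$ splits off a trivial summand $R\xrightarrow{1}R$ of the presentation. This changes $\operatorname{rank}F$ without changing $N$. In that situation $F$ and $F'$ could have different ranks, so no isomorphism $f:F\to F'$ would exist, and no bookkeeping could rescue the ``only if'' direction. The step you flag as the heaviest therefore cannot be carried out. What is actually needed is that this case never occurs, and you never use the hypothesis that $M$ has no free summand to see this.

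\textbf{The missing observation.}
\begin{itemize}
\item Since $\im\partial_1\cap\ker\sigma=0$, you have $\ker\sigma\partial_1=\ker\partial_1=\im\partial_2$. Hence $\im(\sigma\partial_1)\cong F_1/\im\partial_2\cong M$.
\item Suppose $\im(\sigma\partial_1)\not\subseteq\fm F$. Then some $v\in\im(\sigma\partial_1)$ is part of a basis of $F$.
\item The corresponding coordinate function $F\to R$ then restricts to a surjection $M\to R$. This surjection splits, giving $M$ a free summand, contrary to hypothesis.
\item Hence $\im(\sigma\partial_1)\subseteq\fm F$, and likewise $\im(\sigma'\partial_1)\subseteq\fm F'$.
\end{itemize}
Together with $\im\partial_2\subseteq\fm F_1$, this shows that $F_2\to F_1\to F\to N\to 0$ and $F_2\to F_1\to F'\to N'\to 0$ are always beginnings of minimal free resolutions. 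With that in place, your minimal-case argument finishes the proof. The decomposition of $F$ and the row reduction should be deleted.
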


\begin{proof}  Assume that $N$ and $N'$ are isomorphic via some isomorphism $\chi:N \to N'$. Then by Theorem \ref{allcosyz} and its proof there exists a diagram of $R$-modules
\[
\xymatrix{
F_2 \ar[r]^{\partial_2} & F_1 \ar[r]^{\sigma\partial_1} & F \ar[r] & N \ar[r]\ar[d]^\chi & 0\\
F_2 \ar[r]^{\partial_2} & F_1 \ar[r]^{\sigma'\partial_1} & F' \ar[r] & N' \ar[r] & 0\\
}\]
where the rows are minimal exact sequences and $F$, $F'$, $F_1$, and $F_2$ are free. Now a standard comparison theorem argument yields isomorphisms $f:F\to F'$ and $g:F_1\to F_1$ such that 
$\sigma'\partial_1g=f\sigma\partial_1$.

Conversely, the existence of isomorphisms $f:F\to F'$ and $g:F_1\to F_1$ satisfying 
$\sigma'\partial_1g=f\sigma\partial_1$ gives a commutative diagram
\[
\xymatrix{
F_1 \ar[d]^g\ar[r]^{\sigma\partial_1} & F \ar[r]\ar[d]^f & N \ar[r] & 0\\
F_1 \ar[r]^{\sigma'\partial_1} & F' \ar[r] & N' \ar[r] & 0\\
}\]
Now the Five Lemma establishes an isomorphism $\chi:N\to N'$.
\end{proof}

\begin{rmk}\label{Mfreesmd}
Understanding first cosyzygy modules of modules without free summands (as assumed in Theorem \ref{allcosyz}) also gives information about first cosyzygy modules of modules having free summands. Indeed, let $M$ be a torsionless $R$-module which decomposes as $M=M'\oplus F$, where $M'$ has no free summand and $F$ is free. If $N$ is a first cosyzygy of $M$, then there is a pushout diagram with exact rows and columns:
\[\xymatrix{
&0\ar[d]&0\ar[d]&&\\
& M' \ar@{=}[r]\ar[d] & M' \ar[d] &&\\
0\ar[r] & M \ar[r]\ar[d] & P \ar[r]\ar[d] & N \ar[r]\ar@{=}[d] & 0\\
0 \ar[r] & F \ar[r]\ar[d] & E \ar[r]\ar[d] & N \ar[r] & 0\\
 & 0 & 0 &&
}\]
In particular, $N$ is a quotient of $E$, a first cosyzygy of $M'$, by a free module.
\end{rmk}

\section{Iterating first cosyzygies}\label{sec_cosyztorsionless}
Now that we have a handle on all first cosyzygies of $M$, we turn our attention to finding first cosyzygies that embed into free modules, so that the process of building the start of a projective co-resolution can continue.  

Recall from \cite{Aus66} (see also \cite{AB69}) that an $R$-module $N$ embeds into a free module if and only if $\Ext^1_R(\Tr N,R)=0$, where $\Tr N$ is the Auslander transpose of $N$. That is, if $N=\coker\partial_1$, where $\partial_1:R^{n_1}\to R^{n_0}$ is a free presentation of $N$, then $\Tr N=\coker\partial_1^T$, where $\partial_1^T:R^{n_0}\to R^{n_1}$ is the transpose of $\partial_1$. In order for $\Ext^1_R(\Tr N,R)$ to vanish it is sometimes advantageous to choose a first cosyzygy $N$ with fewer generators, so that $\partial_1$ has fewer rows. But there are limitations.  For example, if $\partial_1$ consists of only one row then 
$N\cong R/I$, where $I$ is an ideal generated by the entries of $\partial_1$. If this ideal contains a regular element, then $\Hom_R(N,R)=0$ and this implies $\Tr N$ has projective dimension 1, so that 
$\Ext^1_R(\Tr N,R)\ne 0$. 

We illustrate this discussion with an example. 

\begin{exa} \label{qc_local}
Consider the ring $R=k\llbracket x,y,z\rrbracket /(x^2-yz)$, where $k$ is a field, and the $R$-module $M=\coker{\tiny\begin{pmatrix}-y&x\\x&-z\end{pmatrix}}$; this is a local version of the ring in Example \ref{qc}. Then the canonical first cosyzygy $X$ of $M$ with no free summand is shown in the exact sequence 
\[
\xymatrixcolsep{16mm}
\xymatrix{
R^2 \ar[r]^-{\tiny\setlength\arraycolsep{2pt}\begin{pmatrix}-y&x\\x&-z\end{pmatrix}} & R^2\ar[r]^{\tiny\setlength\arraycolsep{2pt}\begin{pmatrix}z&x\\x&y\end{pmatrix}} & R^2 \ar[r] & X \ar[r] & 0
}
\]
If we reduce the first cosyzygy $X$ (via Theorem \ref{allcosyz}) to either of the first cosyzygies $N=R/(x,y)$ or $N'=R/(z,x)$, then the defining ideals contain a regular element, so that $\Ext^1_R(\Tr N,R)\ne 0$ and $\Ext^1_R(\Tr N',R)\ne 0$. It turns out that $\Ext^1_R(\Tr X,R)=0$, so that $X$ itself is already the correct choice for first cosyzygy---that is, it already embeds into a free module.
\end{exa}

We recall an example from \cite{HJS09} where a module $M$ over an artinian ring $R$ is an infinite syzygy but $\Ext^1_R(M,R)\ne 0$.

\begin{exa}\label{HJS} We let $R=k[x_1,x_2,x_3,x_4,x_5]/I$ where $k$ is a field with an element $\alpha\in k$ of infinite multiplicative order and $I$ is an ideal generated by the 11 quadratic homogeneous forms given in \cite[2.4]{HJS09}. It turns out that $R$ is artinian with radical cube zero. The module $M$ defined as the cokernel of the map $d_0$, where the maps $d_i:R^2\to R^2$ are defined by the matrices ${\tiny\begin{pmatrix}x_1&\alpha^ix_2\\x_3&x_4\end{pmatrix}}$, sits in the acyclic complex
\[\xymatrix{
\cdots \ar[r] & R^2 \ar[r]^{d_2} & R^2 \ar[r]^{d_1} & R^2\ar[r]^{d_0} & R^2 \ar[r]^{d_{-1}} & R^2 \ar[r] & \cdots
}
\]
Since ${\tiny \begin{pmatrix} 0 & x_5\end{pmatrix}\begin{pmatrix}x_1&\alpha^ix_2\\x_3&x_4\end{pmatrix}}=0$ for all $i$, all $\Ext^i_R(M,R)$ are nonzero. If we stack this generator as in Proposition \ref{stack} and consider $X$ as the cokernel of the map $R^2\to R^3$ defined by the matrix ${\tiny\begin{pmatrix} 0&x_5\\x_1&x_2\\x_3&x_4\end{pmatrix}}$ we get the canonical first cosyzygy of $\coker d_1$, but this does not embed into a free module.
\end{exa}

Next we look at syzygies over Gorenstein local rings as a source of cosyzygies.
\begin{exa}\label{gor} Let $(R,\fm,k)$ be a local Gorenstein ring of dimension $d$, equivalently, $R$ is local Cohen-Macaulay of dimension $d$ and type 1. Thus 
\[
\Ext^i_R(k,R)=\begin{cases} k\text{ if } i=d\\ 0\text{ if } i\ne d \end{cases}
\] 
Assume $d\geq 1$. It follows that $\Omega^{d-1}(k)$ is a first cosyzygy of $\Omega^d(k)$, but not the canonical one; there are thus at least two non-isomorphic first cosyzygies of $\Omega^d(k)$. The canonical one lives in a complete resolution of $k$ and $\Omega^{d-1}(k)$ lives in the minimal projective resolution of $k$.  In the latter case, the other cosyzygies $\Omega^i(k)$, for $0\le i\le d-2$, are all canonical. One may think of $\Omega^{d-1}(k)$ as the bifurcation point.
\end{exa} 

From these examples and those in the previous two sections, we see that there are examples of both canonical and non-canonical first cosyzygies, that both embed and fail to embed into a free module, over both a Gorenstein and non-Gorenstein local ring---eight possibilities. We enumerate these here:
\begin{enumerate}
\item Over non-Gorenstein rings, Examples \ref{m2=0} and \ref{HJS} give canonical first cosyzygies which do not embed in a free module. 
\item The Mindy ring $k[x,y,z]/(x^2,y^2,yz,z^2)$ is a non-Gorenstein ring with canonical first cosyzygy module $M=(x)$ which does embed in a free module. 
\item The module $N$ in Example \ref{m2=0} is an example of a non-canonical first cosyzygy module which does not embed in a free module, where $R$ is not Gorenstein.
\item The module $k$ in Example \ref{m2=0} is a non-canonical first cosyzygy module which does embed in a free module, over a non-Gorenstein ring.
\item The module $k$ in Example \ref{gor}, for $d\ge 2$, is a canonical first cosyzygy module which does not embed into a free, over a Gorenstein ring.
\item In Example \ref{gor}, for $d\ge 3$, the module $\Omega^1(k)$ is a canonical first cosyzygy which does embed into a free.
\item The module $N$ in Example \ref{qc_local} is a non-canonical first cosyzygy which does not embed into a free, over a Gorenstein ring.
\item The module $\Omega^{d-1}(k)$ in Example \ref{gor} is a non-canonical first cosyzygy which does embed into a free, for $d\ge 2$.
\end{enumerate}

These examples show that not every first cosyzygy is suitable for iteration---that is, for building the start of a projective co-resolution. This motivates the notion of canonical \emph{higher} cosyzygies, which we investigate next and use to characterize Gorenstein rings.

\section{Canonical higher cosyzygy modules and Gorenstein rings}\label{sec_Gor}
Recall that $R$ is a commutative noetherian ring. We aim to characterize Gorenstein rings in terms of properties of cosyzygy modules. To do this, we develop the notion of canonical $n$\textsuperscript{th} cosyzygy $R$-modules, and consider how the existence of such modules is related to the notion of $n$-torsionfreeness. 

Recall from \cite{AB69} that an $R$-module $M$ is $n$-torsionfree if $\Ext_R^i(\Tr M,R)=0$ for $i=1,\ldots , n$. In particular, $1$-torsionfree is the same as torsionless, $2$-torsionfree is the same as reflexive, and for $n\geq 3$, one has that $M$ is $n$-torsionfree if and only if $M$ is reflexive and $\Ext_R^i(M^*,R)=0$ for $i=1,\ldots,n-2$.

\begin{lem}\label{Mntf_iff_Xn1tf}
Suppose there is an exact sequence $0\to M \to P\to X\to 0$ with $P$ projective and $X$ a canonical first cosyzygy of $M$. For $n\geq 2$, $M$ is $n$-torsionfree if and only if $X$ is $(n-1)$-torsionfree.
\end{lem}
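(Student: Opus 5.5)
The plan is to relate the Auslander transposes of $M$ and $X$ through a single exact sequence. Once that sequence is in hand, a dimension shift in $\Ext_R^\ast(-,R)$ will give the equivalence directly.

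Take a projective presentation $P_1\xrightarrow{\partial_1}P_0\to M\to 0$ and splice it with $0\to M\to P\xrightarrow{} X\to 0$. This gives an exact sequence
\[
P_1\xrightarrow{\partial_1}P_0\xrightarrow{\partial_0}P\to X\to 0,
\]
in which $P_0\xrightarrow{\partial_0}P\to X\to 0$ is a projective presentation of $X$. Hence $\Tr X=\coker(\partial_0^*)$ and $\Tr M=\coker(\partial_1^*)$.

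Now dualize the displayed sequence to get the complex
\[
0\to X^*\to P^*\xrightarrow{\partial_0^*}P_0^*\xrightarrow{\partial_1^*}P_1^*.
\]
Its cohomology at $P_0^*$ is exactly $\Ext^1_R(X,R)$: this follows because $P_1\to P_0\to P\to X\to 0$ is the start of a projective resolution of $X$. Since $X$ is canonical, this cohomology vanishes. So $\ker\partial_1^*=\im\partial_0^*$, and we obtain a short exact sequence
\[
0\to \Tr X\to P_1^*\to \Tr M\to 0.
\]
Here $\Tr X=\coker\partial_0^*\cong P_0^*/\ker\partial_1^*\cong \im\partial_1^*$, and $\Tr M=P_1^*/\im\partial_1^*$. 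This identification is the key step and the only place canonicity is used. I expect the main (mild) obstacle to be checking that the transposes are computed from these particular presentations. Since $\Tr$ is well defined only up to projective summands, and $\Ext^i_R(-,R)$ for $i\geq 1$ ignores those summands, this is harmless.

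Applying $\Hom_R(-,R)$ to the short exact sequence and using that $P_1^*$ is projective gives
\[
\Ext^{i+1}_R(\Tr M,R)\cong\Ext^i_R(\Tr X,R)\quad\text{for } i\geq 1.
\]
It also gives an exact sequence ending in $\Ext^1_R(\Tr M,R)\to 0$, which comes from $P_1^{**}\to(\Tr X)^*$. Now $M$ is torsionless because it embeds in $P$, so $\Ext^1_R(\Tr M,R)=0$ holds automatically. Therefore, for $n\geq 2$, the vanishing of $\Ext^i_R(\Tr M,R)$ for $1\le i\le n$ is equivalent to the vanishing of $\Ext^i_R(\Tr X,R)$ for $1\le i\le n-1$. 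That is exactly the statement that $M$ is $n$-torsionfree if and only if $X$ is $(n-1)$-torsionfree.
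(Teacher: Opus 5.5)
Your proof is correct, and it takes a different route from the paper.

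\textbf{Your route.} You work directly with the definition of $n$-torsionfreeness via the transpose. Splicing gives $P_1\to P_0\to P\to X\to 0$, and canonicity of $X$ makes $P^*\to P_0^*\to P_1^*$ exact. Hence $\Tr X$ is, up to projective summands, a first syzygy of $\Tr M$, so $\Ext^i_R(\Tr X,R)\cong\Ext^{i+1}_R(\Tr M,R)$ for every $i\geq 1$. Combined with $\Ext^1_R(\Tr M,R)=0$, which holds because $M$ embeds in $P$, this settles all $n\geq 2$ at once. Your remark that $\Tr$ is well defined only up to projective summands is the standard fact, and it suffices.

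\textbf{The paper's route.} The paper never touches $\Tr$ explicitly.
\begin{itemize}
\item It uses canonicity to get $0\to X^*\to P^*\to M^*\to 0$, then dualizes again and compares with the original sequence through the canonical maps.
\item The Snake Lemma gives $\ker(X\to X^{**})\cong\coker(M\to M^{**})$, which handles $n=2$ (reflexive versus torsionless).
\item For $n\geq 3$ it invokes the Auslander--Bridger reformulation: $M$ is $n$-torsionfree iff $M$ is reflexive and $\Ext^i_R(M^*,R)=0$ for $1\le i\le n-2$. It then dimension-shifts using that $X^*$ is a first syzygy of $M^*$.
\end{itemize}

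\textbf{Comparison.} Your argument is shorter, needs no case split, and makes clear that canonicity is used exactly once. It also avoids the reformulation, which is itself the same dimension shift along $0\to M^*\to P_0^*\to P_1^*\to\Tr M\to 0$; you simply perform that shift one step earlier, at the level of $\Tr$. The paper's argument instead gives concrete module-theoretic information about the kernels and cokernels of the double-dual maps of $M$ and $X$. It also records the relation between $M^*$ and $X^*$ explicitly, rather than leaving everything encoded in $\Tr$.
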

\begin{proof}
Since $X$ is a canonical first cosyzygy, $\Ext_R^1(X,R)=0$ so there is also an exact sequence $0\to X^*\to P^*\to M^*\to 0$. Dualizing again yields the lower row is exact in the following commutative diagram, where the vertical maps are canonical:
\[\xymatrix{
0\ar[r] & M \ar[r]\ar[d] & P \ar[r]\ar[d]^{\cong} & X \ar[r]\ar[d] & 0 &\\
0 \ar[r] & M^{**} \ar[r] & P^{**} \ar[r] & X^{**} \ar[r] & \Ext_R^1(M^*,R)\ar[r] & 0
}\]
The Snake Lemma implies that $M$ is torsionless and that the kernel of the canonical map $X\to X^{**}$ is isomorphic to the cokernel of the canonical map $M\to M^{**}$. That is, $M$ is $2$-torsionfree if and only if $X$ is $1$-torsionfree. 

Let $n\geq 3$. Suppose that $M$ is $n$-torsionfree. In particular, $\Ext_R^1(M^*,R)=0$, and so the map $P^{**}\to X^{**}$ in the diagram is surjective. Consequently, reflexivity of $M$ implies the same for $X$ via the Snake Lemma. Moreover, as $X^*$ is a first syzygy module of $M^*$, it follows that $\Ext_R^i(M^*,R)\cong \Ext_R^{i-1}(X^*,R)$ for $i>1$, and it follows that $X$ is $(n-1)$-torsionfree. Conversely, if $X$ is $(n-1)$-torsionfree, then at least $X$ is reflexive so the map $P^{**}\to X^{**}$ is surjective. It follows that $\Ext_R^1(M^*,R)=0$. Again, for $i>1$, the isomorphism $\Ext_R^i(M^*,R)\cong \Ext_R^{i-1}(X^*,R)$ implies the result.
\end{proof}

\begin{rmk}
The assumption that $X$ is canonical in Lemma \ref{Mntf_iff_Xn1tf} is necessary. Indeed, consider the ring $R=k\llbracket x,y\rrbracket /(xy)$, where $k$ is a field. 
The $R$-module $M=\coker {\tiny\begin{pmatrix}y & 0 \\ 0 & x\end{pmatrix}}$ is $n$-torsionfree for all $n\geq 1$, and there is also an exact sequence 
\[\xymatrix{
R^2 \ar[r]^{\tiny\begin{pmatrix}y & 0 \\ 0 & x\end{pmatrix}} & R^2 \ar[r]^{\tiny \begin{pmatrix}x & y\end{pmatrix}} & R \ar[r] & k \ar[r] & 0
}\]
which shows that $k$ is a first cosyzygy of $M$. Moreover, there is an exact sequence $0\to M \to R \to k\to 0$, but $k$ is not even torsionless. Observe that $k$ is not a canonical first cosyzygy module of $M$, as $\Ext_R^1(k,R)\cong k$.
\end{rmk}

\begin{dfn}\label{canonical_nth}
Let $M$ and $X$ be $R$-modules such that $X$ is an $n$\textsuperscript{th} cosyzygy of $M$. We say $X$ is \emph{canonical} if $\Ext_R^i(X,R)=0$ for $i=1,\ldots,n$.
\end{dfn}

\begin{prp}\label{nco_ntf}
Let $M$ be an $R$-module and $n\geq1$ an integer. There exists a canonical $n$\textsuperscript{th} cosyzygy module of $M$ if and only if $M$ is $n$-torsionfree.
\end{prp}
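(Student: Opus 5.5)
We argue by induction on $n$, using Proposition \ref{1st_cosyz_exists} and Lemma \ref{Mntf_iff_Xn1tf} as the engine. For $n=1$ the statement says: $M$ has a canonical first cosyzygy if and only if $M$ is torsionless. One direction is Proposition \ref{1st_cosyz_exists}. For the other, the existence of any first cosyzygy forces $M$ to be torsionless, as observed right after that proposition.

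Now let $n\geq 2$ and assume the statement for $n-1$ (for all modules).

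For the forward direction, suppose $X$ is a canonical $n$\textsuperscript{th} cosyzygy of $M$. Take an exact sequence
\[0\to M\to P_{-1}\to \cdots\to P_{-n}\to X\to 0\]
with each $P_{-i}$ projective, and let $Y$ be the image of $P_{-1}\to P_{-2}$. Then $Y$ is a first cosyzygy of $M$, and $X$ is an $(n-1)$\textsuperscript{st} cosyzygy of $Y$. Dimension shifting along this sequence gives
\[\Ext_R^1(Y,R)\cong \Ext_R^{n}(X,R)=0 \quad\text{and}\quad \Ext_R^i(X,R)=0 \text{ for } i\leq n-1.\]
Hence $Y$ is a canonical first cosyzygy of $M$, and $X$ is a canonical $(n-1)$\textsuperscript{st} cosyzygy of $Y$. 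By induction $Y$ is $(n-1)$-torsionfree, and Lemma \ref{Mntf_iff_Xn1tf} applied to $0\to M\to P_{-1}\to Y\to 0$ shows that $M$ is $n$-torsionfree.

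For the converse, suppose $M$ is $n$-torsionfree. Since it is in particular torsionless, Proposition \ref{1st_cosyz_exists} gives a canonical first cosyzygy $Y$ with $0\to M\to P\to Y\to 0$. Lemma \ref{Mntf_iff_Xn1tf} then says $Y$ is $(n-1)$-torsionfree. By induction, $Y$ has a canonical $(n-1)$\textsuperscript{st} cosyzygy $X$. Splicing the two sequences shows that $X$ is an $n$\textsuperscript{th} cosyzygy of $M$. For canonicity, $\Ext_R^i(X,R)=0$ for $i\leq n-1$ by the choice of $X$, and
\[\Ext_R^n(X,R)\cong\Ext_R^1(Y,R)=0\]
by dimension shifting, since $Y$ is canonical.

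The main point needing care is the dimension-shifting identification $\Ext_R^n(X,R)\cong \Ext_R^1(Y,R)$. It follows because the intermediate cokernels $Z_j$ in the $(n-1)$-step sequence from $Y$ to $X$ satisfy $\Ext^{i}_R(Z_{j+1},R)\cong \Ext^{i-1}_R(Z_j,R)$ for $i\geq 2$, using projectivity of the middle terms. This shift runs from $X$ back to $Y$ and lowers the degree by one at each of the $n-1$ steps. No vanishing hypothesis is needed for $i\geq 2$, so the indices line up exactly as required.
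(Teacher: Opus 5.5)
Your proposal is correct and follows the paper's proof essentially verbatim: the same base case, the same induction through Lemma~\ref{Mntf_iff_Xn1tf}, and the same splicing with dimension shifting for the converse. You also spell out why $Y$ is a canonical first cosyzygy in the forward direction, namely $\Ext^1_R(Y,R)\cong\Ext^n_R(X,R)=0$; the paper only asserts this.
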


\begin{proof}
The case $n=1$ follows from Proposition \ref{1st_cosyz_exists} and the discussion following it.
We induct on $n\geq 2$. 

First suppose that $M$ has a canonical $n$\textsuperscript{th} cosyzygy module $X$, so there is an exact sequence
\[\xymatrix{
0\ar[r] & M \ar[r] & P_{-1} \ar[r] & \cdots \ar[r] & P_{-n} \ar[r] & X \ar[r] & 0\;.
}\]
Let $N=\coker(M\to P_{-1})$. Evidently $X$ is a canonical $(n-1)$\textsuperscript{st} cosyzygy of $N$. By induction, we get that $N$ is $(n-1)$-torsionfree. Now, since $N$ is also a canonical first cosyzygy of $M$, Lemma \ref{Mntf_iff_Xn1tf} implies that $M$ is $n$-torsionfree, as desired.

Conversely, assume that $M$ is $n$-torsionfree. In particular, $M$ has a canonical first cosyzygy module $X$, i.e., an exact sequence with $P$ (finitely generated) projective:
\[\xymatrix{
0\ar[r] & M \ar[r] & P \ar[r] & X \ar[r] & 0
}\]
and $\Ext_R^1(X,R)=0$. By Lemma \ref{Mntf_iff_Xn1tf}, the module $X$ is $(n-1)$-torsionfree. Induction says that $X$ has a canonical $(n-1)$\textsuperscript{st} cosyzygy module, say $Y$. That is, $\Ext_R^i(Y,R)=0$ for $i=1,\ldots,n-1$.  Since $X$ is an $(n-1)$\textsuperscript{st} syzygy module of $Y$, one also has
$$\Ext_R^n(Y,R)\cong \Ext_R^1(X,R)=0.$$
That is, $Y$ is a canonical $n$\textsuperscript{th} cosyzygy module of $M$.
\end{proof}

\begin{prp}\label{canonicalnthbuilds}
Let $M$ be an $R$-module having a canonical $n$\textsuperscript{th} cosyzygy $X$. For any other $n$\textsuperscript{th} cosyzygy module $N$ of $M$, there is an exact sequence of $R$-modules
\[\xymatrix{
0\ar[r] & K \ar[r] & X\oplus Q \ar[r] & N\ar[r] & 0
}\]
where $Q$ is projective and $\pd_R K\leq n-1$.
\end{prp}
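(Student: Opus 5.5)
We compare the two coresolutions via a chain map. Write the two exact sequences
\[0\to M\to P_{-1}\to\cdots\to P_{-n}\to X\to 0,\qquad 0\to M\to P'_{-1}\to\cdots\to P'_{-n}\to N\to 0,\]
with all $P_{-i},P'_{-i}$ projective. The key point is that the canonical coresolution has a lifting property: since $\Ext^i_R(X,R)=0$ for $1\le i\le n$, the truncated complex $0\to M\to P_{-1}\to\cdots\to P_{-n}$ behaves well under $\Hom_R(-,Q)$ for any projective $Q$. As $Q$ is a summand of some $R^m$, we have $\Ext^i_R(X,Q)=0$ for $1\le i\le n$, and also $\Ext^i_R(C_j,Q)=0$ for the intermediate cokernels $C_j=\coker(P_{-j+1}\to P_{-j})$ in the relevant range. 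Here we use dimension shifting, together with the fact that each intermediate cokernel is itself a canonical cosyzygy of lower order.

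\emph{Step 1 (the chain map).} We construct a chain map from the canonical coresolution to the other one, extending $\id_M$. First, the embedding $M\to P'_{-1}$ extends along $M\to P_{-1}$ because $\Ext^1_R(C_1,P'_{-1})=0$, where $C_1=\coker(M\to P_{-1})$. This gives $\alpha_{-1}:P_{-1}\to P'_{-1}$, which induces a map $C_1\to C'_1$. We then iterate: the map $C_1\to C'_1\hookrightarrow P'_{-2}$ extends along $C_1\hookrightarrow P_{-2}$ because $\Ext^1_R(C_2,P'_{-2})=0$, and by dimension shifting $\Ext^1_R(C_2,-)\cong \Ext^2_R(C_1,-)$ vanishes since the higher Ext of $X$ vanish. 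Continuing in this way produces $\alpha_{-i}$ for $i=1,\dots,n$ and an induced map $\beta:X\to N$.

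\emph{Step 2 (the mapping cone).} Take the mapping cone of the chain map between the two augmented coresolutions, each viewed as a resolution-like complex ending in $X$ and in $N$. Since both rows are exact and the map on $M$ is the identity, the cone yields an exact sequence
\[0\to P_{-1}\to P_{-2}\oplus P'_{-1}\to\cdots\to P_{-n}\oplus P'_{-n+1}\to X\oplus P'_{-n}\to N\to 0.\]
The copy of $M$ cancels against the identity, which we can arrange by using the cone of the chain map on the complexes starting at $M$ and noting that the cone of $\id_M$ is contractible. The map $X\oplus P'_{-n}\to N$ is $(\beta,\epsilon')$, and it is surjective. Set $Q=P'_{-n}$ and let $K$ be its kernel. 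The displayed sequence then shows that $K$ has a projective resolution of length $n-1$, with terms $P_{-1},\ P_{-2}\oplus P'_{-1},\ \dots,\ P_{-n}\oplus P'_{-n+1}$. Hence $\pd_R K\le n-1$.

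\emph{Main obstacle.} The hard part is getting the exactness and index bookkeeping of the cone right, especially checking that $K$ is exactly the image of $P_{-n}\oplus P'_{-n+1}$ and that the left end is injective. I would verify this with the long exact homology sequence of the cone of the chain map between the complexes $0\to M\to P_{-1}\to\cdots\to P_{-n}\to X\to 0$. Alternatively, I could argue inductively on $n$: the base case $n=1$ is exactly Proposition \ref{relation1cosyz} (where $K=P$ is projective), and for the inductive step I would combine the exact sequence for the first cosyzygies with Proposition \ref{main} to propagate the bound on $\pd K$. A secondary check is that the Ext vanishing really does give liftings at every stage, which needs $\Ext^1_R(C_j,R)=0$ for each $j\le n$; this holds since $\Ext^1_R(C_j,R)\cong\Ext^{n-j+1}_R(X,R)=0$.
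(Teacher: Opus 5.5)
Your proposal is correct and has the same skeleton as the paper's proof. Both build a chain map from the canonical coresolution to the given one extending $\id_M$, and both turn it into a surjection $X\oplus Q\to N$ whose kernel has a projective resolution of length $n-1$. The execution differs in both halves.

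For the chain map, the paper dualizes. Canonicity says exactly that $P_{-n}^*\to\cdots\to P_{-1}^*\to P_0^*\to P_1^*$ is exact, i.e.\ it is a truncated projective resolution of $M^*$. The ordinary comparison theorem then gives maps $Q_{-i}^*\to P_{-i}^*$ over $\id_{P_0^*}$, and dualizing back (finitely generated projectives are reflexive) gives the chain map. You extend directly instead: the obstruction at stage $j$ lies in $\Ext^1_R(C_j,P'_{-j})$, which vanishes because $C_j$ is an $(n-j)$th syzygy of $X$. This uses the same information without the double dualization.

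For the second half, the paper adds contractible projective pieces to make the chain map degreewise surjective and takes the kernel complex, leaving the bookkeeping implicit. Your mapping cone produces the same object explicitly, with $Q=P'_{-n}$ and resolution $0\to P_{-1}\to P_{-2}\oplus P'_{-1}\to\cdots\to P_{-n}\oplus P'_{-n+1}\to K\to 0$.

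The exactness you flag as the main obstacle is immediate. The chain map between $P_{-1}\to\cdots\to P_{-n}\to X$ and $P'_{-1}\to\cdots\to P'_{-n}\to N$ is a quasi-isomorphism: each complex has homology $M$ in the degree of $P_{-1}$ and nothing else, and the induced map is $\id_M$. So its cone, which is precisely your displayed sequence, is exact.

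There is one slip. In Step 1, the isomorphism $\Ext^1_R(C_2,-)\cong\Ext^2_R(C_1,-)$ is backwards: the sequence $0\to C_1\to P_{-2}\to C_2\to 0$ gives $\Ext^2_R(C_2,-)\cong\Ext^1_R(C_1,-)$. The vanishing you actually need is $\Ext^1_R(C_2,R)\cong\Ext^{n-1}_R(X,R)=0$. This is an instance of the correct formula $\Ext^1_R(C_j,R)\cong\Ext^{n-j+1}_R(X,R)$ that you state at the end.
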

\begin{proof}
The case $n=1$ is already taken care of by Proposition \ref{relation1cosyz}. Assume $n\geq 2$. 
Since $X$ and $N$ are both $n$\textsuperscript{th} cosyzygies of $M$, there are exact sequences
\[\xymatrix{
P_1 \ar[r]^{\del_1} & P_0 \ar[r] & P_{-1} \ar[r] & \cdots \ar[r] & P_{-n} \ar[r] & X \ar[r] & 0
}\]
and 
\[\xymatrix{
P_1 \ar[r]^{\del_1} & P_0 \ar[r] & Q_{-1} \ar[r] & \cdots \ar[r] & Q_{-n} \ar[r] & N \ar[r] & 0
}\]
where $M=\coker\del_1$ and $P_i$ and $Q_i$ are (finitely generated) projective modules. Dualizing both of these sequences yields the following, where the top row is exact since $X$ is a canonical $n$\textsuperscript{th} cosyzygy module of $M$:
\[\xymatrix{
0 \ar[r] & X^* \ar[r] & P_{-n}^* \ar[r] & \cdots \ar[r] & P_{-1}^* \ar[r] & P_0^* \ar[r]^{\del_1^*} & P_1^*\\
& N^* \ar[r] & Q_{-n}^* \ar[r] & \cdots \ar[r] & Q_{-1}^* \ar[r] & P_0^*\ar[r]^{\del_1^*} \ar[u]^{=}& P_1^*\ar[u]^{=}
}\]
(The bottom row need not be exact.) Exactness of the top row and projectivity of each $Q_{-i}^*$ produce maps $\varphi_i:Q_{-i}^*\to P_{-i}^*$ lifting the identity on $M^*$. Dualizing this diagram then produces the following commutative diagram:
\[\xymatrix{
P_1^{**} \ar[r]\ar[d]^{=} & P_0^{**} \ar[r] \ar[d]^{=}& P_{-1}^{**} \ar[r]\ar[d]^{\varphi_{-1}^*} & \cdots \ar[r] & P_{-n}^{**} \ar[r] \ar[d]^{\varphi_{-n}^*}& X^{**} \\
P_1^{**}\ar[r] & P_0^{**} \ar[r] & Q_{-1}^{**} \ar[r] & \cdots \ar[r] & Q_{-n}^{**} \ar[r] & N^{**} 
}\]
In fact both rows are exact as (finitely generated) projective modules are reflexive, and we therefore have a commutative diagram as follows:
\[\xymatrix{
P_1 \ar[r]\ar[d]^{=} & P_0 \ar[r] \ar[d]^{=}& P_{-1} \ar[r]\ar[d]^{(\varphi_{-1}^{*})'} & \cdots \ar[r] & P_{-n} \ar[r] \ar[d]^{(\varphi_{-n}^{*})'}& X\ar[r]\ar@{-->}[d]^{\psi} & 0\\
P_1\ar[r] & P_0 \ar[r] & Q_{-1} \ar[r] & \cdots \ar[r] & Q_{-n} \ar[r] & N\ar[r] & 0
}\]
We can make this a surjective chain map by adding appropriate projective modules to the top row. This gives a new commutative diagram where the vertical maps are all surjective and $Q$ and $P_{-i}'$ are (finitely generated) projective modules:
\[\xymatrix{
P_1 \ar[r]\ar[d]^{=} & P_0 \ar[r] \ar[d]^{=}& P'_{-1} \ar[r]\ar@{->>}[d]^{(\varphi_{-1}^{*})''} & \cdots \ar[r] & P'_{-n} \ar[r] \ar@{->>}[d]^{(\varphi_{-n}^{*})''}& X\ar[r]\ar@{->>}[d]^{\psi'} \oplus Q& 0\\
P_1\ar[r] & P_0 \ar[r] & Q_{-1} \ar[r] & \cdots \ar[r] & Q_{-n} \ar[r] & N\ar[r] & 0
}\]
Evidently then $K=\ker(\psi')$ has projective dimension at most $n-1$.
\end{proof}

\begin{cor}
Let $M$ be an $R$-module with canonical $n$\textsuperscript{th} cosyzygies $X$ and $X'$. Then $X$ and $X'$ are stably isomorphic.
\end{cor}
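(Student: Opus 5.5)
Apply Proposition \ref{canonicalnthbuilds} with $N=X'$. This gives an exact sequence
\[
0\lra K\lra X\oplus Q\lra X'\lra 0
\]
with $Q$ projective and $\pd_R K\leq n-1$. The plan is to show that this sequence splits and that $K$ is projective. Then $X\oplus Q\cong X'\oplus K$, which is a stable isomorphism.

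First I show that $K$ is projective. Since $X'$ is a canonical $n$\textsuperscript{th} cosyzygy, $\Ext_R^i(X',R)=0$ for $1\leq i\leq n$, and likewise $\Ext_R^i(X\oplus Q,R)=0$ for $1\leq i\leq n$. The long exact sequence for $\Hom_R(-,R)$ gives, for $1\leq i\leq n-1$, the exact piece
\[
\Ext_R^i(X\oplus Q,R)\lra\Ext_R^i(K,R)\lra\Ext_R^{i+1}(X',R).
\]
Both outer terms vanish, so $\Ext_R^i(K,R)=0$ for $1\leq i\leq n-1$. Now suppose $p=\pd_R K$ satisfies $1\leq p\leq n-1$. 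Then $\Ext_R^p(K,L)$ is nonzero for some finitely generated module $L$. For finite projective dimension $p$, vanishing of $\Ext^p_R(K,-)$ on finitely generated projectives forces vanishing on all finitely generated modules. I would check this by truncating a length-$p$ projective resolution $0\to P_p\xrightarrow{d} P_{p-1}\to\cdots$ of $K$: the condition $\Ext_R^p(K,P_p)=0$ says $\Hom_R(d,P_p)$ is surjective, so the identity of $P_p$ factors through $d$. Hence $d$ splits and $\pd K<p$, a contradiction. Note that $\Ext^p_R(K,R)=0$ gives $\Ext^p_R(K,P)=0$ for every finitely generated projective $P$, since $P$ is a summand of some $R^m$. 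This is the step I expect needs the most care, but it is the standard argument. The conclusion is that $K$ is projective. The case $n=1$ is immediate, since there $\pd K\leq 0$ directly, and it also follows from Corollary \ref{stablyiso}.

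Finally, since $K$ is projective and $\Ext_R^1(X',R)=0$, we also get $\Ext_R^1(X',K)=0$. Hence the sequence splits, so $X\oplus Q\cong X'\oplus K$, and $X$ and $X'$ are stably isomorphic.
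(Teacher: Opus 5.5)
Your proof is correct and follows the paper's route: apply Proposition \ref{canonicalnthbuilds} with $N=X'$, then show the resulting sequence splits with $K$ projective. The only difference is the order of the two steps. The paper gets the splitting directly by dimension shifting $\Ext^1_R(X',K)$ along a projective resolution of $K$ of length at most $n-1$, and leaves the projectivity of $K$ implicit. You first prove $K$ projective from $\Ext^i_R(K,R)=0$ for $1\leq i\leq n-1$ together with $\pd_R K\leq n-1$; this makes the splitting immediate and explicitly justifies reading $X\oplus Q\cong X'\oplus K$ as a stable isomorphism.
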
 
\begin{proof}
The short exact sequence in the statement of Proposition \ref{canonicalnthbuilds} splits if $N$ is a canonical $n$\textsuperscript{th} cosyzygy and $\pd_RK\leq n-1$. This follows by dimension shifting.
\end{proof}

We now come to our first characterization of local Gorenstein rings:

\begin{thm}\label{Gor_char}
Assume that $R$ is local.
The ring $R$ is Gorenstein if and only if for every torsionless $R$-module $M$ with $\depth M\geq \depth R$, its canonical first cosyzygy $X$ with no free summand is torsionless and satisfies $\depth X\geq \depth R$.
\end{thm}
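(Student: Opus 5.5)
The forward implication rests on standard facts about maximal Cohen--Macaulay (MCM) modules over Gorenstein rings; the converse is proved by iterating the hypothesis to produce a totally reflexive syzygy of $k$.

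\emph{Forward direction.} Assume $R$ is Gorenstein with $d=\depth R=\dim R$. A module $M$ with $\depth M\geq \depth R$ is then MCM, and over a Gorenstein ring MCM modules are totally reflexive. In particular $M$ is reflexive and $\Ext_R^i(M^*,R)=0$ for all $i\geq1$, so $M$ is $n$-torsionfree for every $n$. By Lemma \ref{Mntf_iff_Xn1tf}, the canonical first cosyzygy $X$ is $(n-1)$-torsionfree for every $n$, and in particular torsionless.

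For the depth of $X$, I would argue through duals. Since $\Ext^1_R(X,R)=0$, there is an exact sequence $0\to X^*\to P^*\to M^*\to 0$, so $X^*$ is a first syzygy of the MCM module $M^*$ and is therefore MCM. Since $X$ is reflexive (it is $2$-torsionfree, taking $n=3$ above), we get $X\cong X^{**}$. This is the dual of an MCM module, hence MCM, so $\depth X\geq\depth R$. Passing to the summand with no free part changes neither property.

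\emph{Converse, reduction.} Assume the condition and set $d=\depth R$. The goal is to produce a totally reflexive syzygy of $k$, so that $k$ has finite G-dimension and $R$ is Gorenstein by Auslander--Bridger. I would take $M=\Omega^{d+1}k$, or $\Omega^{d}k$ when $d\geq1$. This is a submodule of a free module, hence torsionless, and by the depth lemma $\depth M\geq d$.

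Applying the hypothesis repeatedly gives canonical first cosyzygies $0\to M\to F^{-1}\to X_1\to0$, $0\to X_1\to F^{-2}\to X_2\to0$, and so on. Each $X_j$ is again torsionless of depth $\geq d$, so the hypothesis applies at every stage. By dimension shifting along this sequence, $X_n$ is a canonical $n$\textsuperscript{th} cosyzygy of $M$: indeed $\Ext^i_R(X_n,R)\cong\Ext^{1}_R(X_{n-i+1},R)=0$ for $1\le i\le n$. Proposition \ref{nco_ntf} then shows that $M$ is $n$-torsionfree for all $n$, that is, $\Ext^i_R(\Tr M,R)=0$ for all $i\geq1$. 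Equivalently, the co-resolution $0\to M\to F^{-1}\to F^{-2}\to\cdots$ stays exact after dualizing.

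\emph{Converse, main obstacle.} To conclude that $M$ is totally reflexive, I still need $\Ext_R^i(M,R)=0$ for $i\geq1$, i.e.\ $\Ext^{i}_R(k,R)=0$ for $i\gg0$. This is not yet supplied by the argument above, and I expect it to be the hard step.

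My first attempt would be to splice the minimal free resolution of $M$ with the co-resolution $F^{-\bullet}$ into a doubly infinite acyclic complex. I would then run the same construction on $M^*$ and on its syzygies to obtain vanishing in the other direction. For $M^*$ the hypothesis requires $\depth M^*\geq d$, which has to be checked separately: in general one only gets $\depth M^*\geq\min(2,d)$. So I would aim to choose $M$ so that $M^*$ is itself a high syzygy, for instance $M=\Omega^{d+1}k$, where dualizing the resolution of $k$ exhibits $M^*$ as a cosyzygy of modules built from $\Ext^{\le d}_R(k,R)$.

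If that route stalls, the fallback is the known criterion that $R$ is Gorenstein once a high syzygy of $k$ is infinitely torsionfree and of maximal depth. I would use it by comparing $\depth X_j$ with the Bass numbers of $R$ along the sequence $X_1, X_2,\ldots$.
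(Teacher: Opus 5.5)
Your forward direction is correct. It swaps the paper's use of Ischebeck's formula for total reflexivity of maximal Cohen--Macaulay modules, combined with Lemma \ref{Mntf_iff_Xn1tf} and $X\cong X^{**}$, and that works. In the converse, everything up to ``$M$ is $n$-torsionfree for all $n$'' is also correct and matches the paper's method: iterate the hypothesis, dimension-shift to get a canonical $n$th cosyzygy, then apply Proposition \ref{nco_ntf}.

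The gap is the last step. You try to show that the single module $M=\Omega^{d+1}k$ is totally reflexive. That needs $\Ext^{i}_R(M,R)\cong\Ext^{i+d+1}_R(k,R)=0$ for $i\geq 1$, and you give no argument for it. What you have derived about $M$ concerns only its co-resolution to the right, i.e.\ the $\Ext$ of its cosyzygies. That says nothing about $\Ext^{i}_R(M,R)$. Running the construction on $M^*$ needs $\depth M^*\geq d$, which, as you note yourself, is not available. The ``known criterion'' about an infinitely torsionfree high syzygy of $k$, and the comparison with Bass numbers, are neither identified nor proved. They are essentially the statement that remains to be shown. This step cannot be routine: the conditions defining total reflexivity are known to be independent (Jorgensen--\c{S}ega). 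So $\Ext^{i}_R(\Tr M,R)=0$ for all $i\geq 1$ does not by itself give $\Ext^{i}_R(M,R)=0$.

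The missing idea is that you need neither to specialize to $k$ nor to reach total reflexivity. Your iteration works verbatim for an \emph{arbitrary} $(t+2)$nd syzygy module $M$, where $t=\depth R$:
\begin{itemize}
\item such an $M$ is torsionless and has $\depth M\geq t$ by the depth lemma;
\item so the hypothesis can be applied $t+2$ times;
\item then Proposition \ref{nco_ntf} shows $M$ is $(t+2)$-torsionfree.
\end{itemize}
Hence every $(t+2)$nd syzygy module is $(t+2)$-torsionfree. From this the paper concludes, by Auslander--Bridger's characterization \cite[Proposition 4.21]{AB69}, that $R$ is Gorenstein.
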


\begin{proof}
Set $t=\depth R$.
First suppose that $R$ is Gorenstein. Let $M$ be a torsionless $R$-module with $\depth M\geq t$ (hence $\depth M=t$ in this case). By Proposition \ref{1st_cosyz_exists}, $M$ has a canonical first cosyzygy module $X$ with no free summand; that is, there is a short exact sequence
\[\xymatrix{
0\ar[r] & M\ar[r] & F \ar[r] & X \ar[r]& 0
}\]
with $F$ a (finitely generated) free $R$-module and $\Ext_R^1(X,R)=0$. From this short exact sequence, we also see that $\Ext_R^i(M,R)\cong \Ext_R^{i+1}(X,R)$ for $i\geq 1$.  Since $R$ is Gorenstein, a result due to Ischebeck \cite{Isc69} (see also \cite[Exercise 3.1.24]{BH93}) states that for any (finitely generated) $R$-module $N\ne0$ one has $\depth(R)-\depth(N)=\sup\{i\geq0\mid \Ext_R^i(N,R)\not=0\}$. As $\depth M=t$, we thus have $\Hom_R(M,R)\not=0$ and $0=\Ext_R^i(M,R)\cong\Ext_R^{i+1}(X,R)$ for $i\geq 1$. Therefore, to check that $\depth X\geq t$, it suffices (again by Ischebeck's result) to observe that $\Ext_R^1(X,R)=0$ as $X$ is a canonical first cosyzygy. In fact, $X$ is maximal Cohen-Macaulay, and thus is reflexive (e.g., \cite[Theorem 3.3.10(d)(iii)]{BH93}). In particular, it is torsionless.

Now for the converse. Let $M$ be an arbitrary $(t+2)$\textsuperscript{nd} syzygy module. In particular, it must be torsionless. By the Depth Lemma, we know that $\depth M\geq t$. The assumption then implies that there exists a canonical first cosyzygy module $X_{-1}$ that is torsionless and has $\depth X_{-1}\geq t$: $0\to M\to F_{-1}\to X_{-1}\to 0$. We can continue this process to build an exact sequence
\[\xymatrix{
0\ar[r] & M \ar[r] & F_{-1}\ar[r] & F_{-2}\ar[r] & \cdots \ar[r] & F_{-(t+2)} \ar[r] & X_{-(t+2)}\ar[r] & 0
}\]
where each $F_{-i}$ is (finitely generated) projective and $X_{-i}=\coker (F_{-(i-1)}\to F_{-i})$ is a canonical first cosyzygy module of $X_{-(i-1)}$ for $i>1$. That is, $\Ext_R^1(X_{-i},R)=0$ for $i=1,\ldots,t+2$. This implies that $\Ext_R^i(X_{-(t+2)},R)=0$ for $i=1,\ldots,t+2$, so $X_{-(t+2)}$ is a canonical $(t+2)$\textsuperscript{nd} cosyzygy module of $M$. By Proposition \ref{nco_ntf}, this implies that $M$ is $(t+2)$-torsionfree. This shows that every $(t+2)$-syzygy module is $(t+2)$-torsionfree, thus \cite[Proposition 4.21]{AB69} implies that $R$ is Gorenstein.
\end{proof}

It is known from \cite[Proposition 2.9]{MRS18} that a local ring $R$ is artinian Gorenstein if and only if every $R$-module is torsionless. The next corollary observes that, under the weaker assumption of $\depth R=0$, it is enough for the class of torsionless $R$-modules to be closed under taking canonical first cosyzygies:
\begin{cor}
Assume that $R$ is local. The ring $R$ is artinian Gorenstein if and only if $\depth R=0$ and for every torsionless $R$-module, its canonical first cosyzygy module (with no free summand) is torsionless.\hfill $\square$
\end{cor}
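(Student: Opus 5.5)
The plan is to deduce the corollary directly from Theorem \ref{Gor_char} in the case $\depth R=0$, together with \cite[Proposition 2.9]{MRS18} for the forward direction.

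For the forward implication, assume $R$ is artinian Gorenstein. Then $\dim R=0$, so $\depth R=0$. Let $M$ be a torsionless $R$-module and $X$ its canonical first cosyzygy with no free summand, which exists by Proposition \ref{1st_cosyz_exists} together with the splitting-off discussion preceding Lemma \ref{freesummands}. By \cite[Proposition 2.9]{MRS18}, every $R$-module over an artinian Gorenstein local ring is torsionless, so in particular $X$ is. Alternatively, one can apply Theorem \ref{Gor_char}: every module has depth $\geq 0=\depth R$, so the theorem gives directly that $X$ is torsionless.

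For the converse, assume $\depth R=0$ and that canonical first cosyzygies with no free summand of torsionless modules are torsionless. I verify the hypothesis of Theorem \ref{Gor_char}. Let $M$ be a torsionless module. Then $\depth M\geq 0=\depth R$ holds trivially. By assumption its canonical first cosyzygy $X$ is torsionless, and $\depth X\geq 0=\depth R$ is again automatic. Theorem \ref{Gor_char} then gives that $R$ is Gorenstein. A Gorenstein local ring is Cohen--Macaulay, so $\dim R=\depth R=0$, and hence $R$ is artinian.

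The zero module needs a convention. If $X=0$ (for instance when $M$ is free), then $\depth X=\infty$ and $X$ is torsionless, so no case breaks. There is no genuine obstacle here. The only point needing care is that the depth conditions in Theorem \ref{Gor_char} become vacuous when $\depth R=0$, and that Gorenstein together with depth zero forces artinian via the Cohen--Macaulay property.
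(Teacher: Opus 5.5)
Your proof is correct and is the argument the paper has in mind: it omits the proof because, when $\depth R=0$, the depth conditions in Theorem \ref{Gor_char} hold automatically, so the theorem reduces to the stated equivalence. It remains only to note, as you do, that a Gorenstein (hence Cohen--Macaulay) local ring of depth zero is artinian; your alternative appeal to \cite[Proposition 2.9]{MRS18} for the forward direction is also valid.
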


We end the paper with another characterization of Gorenstein local rings:
\begin{thm} 
Assume that $(R,\fm,k)$ is local. Then $R$ is artinian Gorenstein if and only if $k$ has a canonical second cosyzygy that is torsionless. 
\end{thm}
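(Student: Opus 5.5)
We treat the two implications separately. The forward one follows from Proposition \ref{nco_ntf}. The converse uses the torsionless hypothesis to push $k$ up to being $3$-torsionfree, and then reads off the type and dimension of $R$.

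($\Rightarrow$) Suppose $R$ is artinian Gorenstein. Then $R$ is self-injective, so $\Ext^i_R(N,R)=0$ for all $i\geq 1$ and every $R$-module $N$. In particular $\Ext^i_R(\Tr k,R)=0$ for all $i\geq 1$, so $k$ is $2$-torsionfree. Proposition \ref{nco_ntf} then gives a canonical second cosyzygy $Y$ of $k$. Every module over an artinian Gorenstein local ring is torsionless (\cite[Proposition 2.9]{MRS18}), so $Y$ is torsionless.

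($\Leftarrow$) Let $Y$ be a torsionless canonical second cosyzygy of $k$, so there is an exact sequence $0\to k\to P_{-1}\to P_{-2}\to Y\to 0$ with $\Ext^i_R(Y,R)=0$ for $i=1,2$.

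First, $k$ embeds in $P_{-1}$, so $\fm\in\operatorname{Ass}R$ and $\depth R=0$.

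Second, since $Y$ is torsionless, Proposition \ref{1st_cosyz_exists} gives an exact sequence $0\to Y\to P_{-3}\to Z\to 0$ with $\Ext^1_R(Z,R)=0$. Dimension shifting gives $\Ext^{i}_R(Z,R)\cong\Ext^{i-1}_R(Y,R)=0$ for $i=2,3$. Splicing, $Z$ is a canonical third cosyzygy of $k$, and Proposition \ref{nco_ntf} shows that $k$ is $3$-torsionfree. By the description of $n$-torsionfreeness recalled before Lemma \ref{Mntf_iff_Xn1tf}, this means $k$ is reflexive and $\Ext^1_R(k^*,R)=0$.

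Now $k^*=\Hom_R(k,R)\cong k^r$, where $r$ is the type of $R$, and $r\geq1$ because $\depth R=0$.
\begin{itemize}
\item Reflexivity gives $k\cong k^{**}\cong \Hom_R(k^r,R)\cong k^{r^2}$, so $r=1$.
\item The vanishing of $\Ext^1_R(k^*,R)$ then gives $\Ext^1_R(k,R)=0$, i.e.\ the Bass number $\mu^1(\fm,R)$ is zero.
\end{itemize}

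The remaining, and main, step is to conclude that $\dim R=0$. Once this holds, $R$ is artinian of type $1$, hence Gorenstein. I would invoke Roberts' theorem on Bass numbers: for a finitely generated module $N$ over a local ring, $\mu^i(\fm,N)\neq 0$ for every $i$ with $\depth N\leq i\leq \dim N$. Applied to $N=R$ with $\depth R=0$, a positive dimension would force $\mu^1(\fm,R)\neq0$, contradicting $\Ext^1_R(k,R)=0$. Hence $\dim R=0$.

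If one prefers to avoid citing this result, I would try an elementary alternative. The vanishing $\Ext^1_R(k,R)=0$ says every map $\fm\to R$ extends to $R$, which one can try to exploit via a height-one prime. This seems more delicate, so I expect to rely on Roberts' theorem here.
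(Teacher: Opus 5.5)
Your proof is correct, but it reaches the key fact $\Ext^1_R(k,R)=0$ by a different route than the paper.

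The paper obtains a one-dimensional socle from \cite[Lemma 2.8]{MRS18}, using that $k$ is a second syzygy. It then uses Proposition \ref{stack} to present the canonical first and second cosyzygies of $k$ explicitly by $s$ and by $(x_1,\dots,x_e)^T$. The torsionless hypothesis embeds the latter into $R^n$, and a comparison with the dual of the minimal free resolution of $k$ gives $\ker\partial_1^*\subseteq\im(x_1,\dots,x_e)^T$, i.e.\ $\Ext^1_R(k,R)=0$.

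You stay within the paper's general machinery instead. Proposition \ref{1st_cosyz_exists} and dimension shifting turn the torsionless canonical second cosyzygy into a canonical third one, so Proposition \ref{nco_ntf} makes $k$ $3$-torsionfree. The Auslander--Bridger description (reflexive with $\Ext^1_R(k^*,R)=0$) then gives type one and $\Ext^1_R(k,R)=0$. This needs no matrix computations and no external lemma, and it shows that the hypothesis really amounts to ``$k$ is $3$-torsionfree''.

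Both proofs then need the same final step: from $\depth R=0$ and $\Ext^1_R(k,R)=0$ to self-injectivity of $R$. The paper simply asserts it, while you justify it with Roberts' nonvanishing theorem for Bass numbers, which is legitimate.

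If you want to avoid that citation, the step is elementary:
\begin{enumerate}
\item Put $I=\Gamma_{\fm}(R)$, which is nonzero since $\depth R=0$, and note $\Hom_R(k,R/I)=0$.
\item The long exact sequence then embeds $\Ext^1_R(k,I)$ into $\Ext^1_R(k,R)=0$, so $\mu^1(\fm,I)=0$.
\item Since $I$ has finite length, its only possible Bass numbers are at $\fm$, so $\mu^1(\fm,I)=0$ forces $I$ to be injective.
\item Hence $I$ is a nonzero direct summand of the local ring $R$, so $I=R$, and $R$ is artinian and self-injective.
\end{enumerate}
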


\begin{proof} $(\Longrightarrow)$ is easy.  $(\Longleftarrow)$: Since $k$ is a second syzygy we know from \cite[Lemma 2.8]{MRS18} that $R$ has a one-dimensional socle, generated by $s$, say. Let $x_1,\dots,x_e$ be a minimal generating set for $\mathfrak m$. By assumption we have an exact sequence
\[
\xymatrixcolsep{11mm}
\xymatrix{
R^e\ar[r]^{\tiny\begin{pmatrix}x_1 \cdots x_e\end{pmatrix}} & R \ar[r]^s & R\ar[r]^{\tiny\begin{pmatrix}x_1 \\ \vdots \\ x_e\end{pmatrix}} & R^e\ar[r]^\partial & R^n 
}
\]
From Proposition \ref{stack}, with $\partial_2=s$, we see that the canonical first cosyzygy of $k$ is presented by $s$ and the canonical second cosyzygy of $k$ with no free summand is presented by $\tiny\begin{pmatrix}x_1 \\ \vdots \\ x_e\end{pmatrix}$. We then achieve a commutative diagram
\[
\xymatrixcolsep{11mm}
\xymatrix{
0 \ar[r] & k \ar[r]\ar@{=}[d] & R\ar[r]^{\tiny\begin{pmatrix}x_1 \\ \vdots \\ x_e\end{pmatrix}}\ar@{=}[d] & R^e\ar[r]^{\partial_1^*}\ar@{=}[d] & R^{b_1}\ar[d]^\alpha \\
0 \ar[r] & k \ar[r] & R\ar[r]^{\tiny\begin{pmatrix}x_1 \\ \vdots \\ x_e\end{pmatrix}} & R^e\ar[r]^\partial & R^n 
}
\]
where the top row is the dual of the beginning of the minimal free resolution of $k$ and the map $\alpha$ is induced by dualizing the assumed exact sequence above and comparing with the minimal free resolution, and then dualizing back. Thus we have $\ker\partial_1^*\subseteq\ker(\alpha\partial_1^*)=\ker\partial=\im\tiny\begin{pmatrix}x_1 \\ \vdots \\ x_e\end{pmatrix}$. This shows that 
$\Ext^1_R(k,R)=0$, which means $R$ is injective, i.e. artinian Gorenstein.
\end{proof}

\bibliographystyle{plain}

\end{document}